\newtheorem{thm}{Theorem}[section]
\newtheorem{lem}[thm]{Lemma}
\newtheorem{prop}[thm]{Proposition}
\newtheorem{rem}[thm]{Remark}
\numberwithin{equation}{section}
\def\C{\mathbb{C}}
\def\N{\mathbb{N}}
\def\Q{\mathbb{Q}}
\def\R{\mathbb{R}}
\def\DD{\mathcal{D}}
\def\EE{\mathcal{E}}
\def\HH{\mathcal{H}}
\def\SS{\mathcal{S}}
\def\LLL{\mathscr{L}}
\def\supp{\text{\rm supp}}
\def\range{\rm ran}
\def\sgn{\rm sgn}
\def\lan{\langle}
\def\ran{\rangle}
\def\da{\downarrow}
\def\ra{\rightarrow}
\def\bs{\backslash}
\def\ol{\overline}
\def\ep{\epsilon}
\def\la{\lambda}
\def\si{\sigma}
\def\Si{\Sigma}
\def\om{\omega}
\def\Om{\Omega}
\def\de{\delta}
\def\De{\Delta}
\def\ga{\gamma}
\def\vp{\varphi}
\begin{document}

\nocite{*}

\title{Completeness for Sparse Potential Scattering}

\author{Zhongwei Shen\footnote{Email: zzs0004@auburn.edu}\\Department of Mathematics and Statistics\\Auburn University\\Auburn, AL 36849\\USA}

\date{}

\maketitle

\begin{abstract}
The current paper is devoted to the scattering theory of a class of continuum Schr\"{o}dinger operators with deterministic sparse potentials. We first establish the limiting absorption principle for both modified free resolvents and modified perturbed resolvents. This actually is a weak form of the classical limiting absorption principle.  We then prove the existence and completeness of local wave operators, which, in particular, imply the existence of wave operators. Under additional assumptions on the sparse potential, we prove the completeness of wave operators. In the context of continuum Schr\"{o}dinger operators with sparse potentials, this paper gives the first proof of the completeness of wave operators.\\
\textbf{Keywords.} Schr\"{o}dinger operator, sparse potential, limiting absorption principle, (local) wave operator, completeness.\\
\textbf{2010 Mathematics Subject Classification.} Primary 47A40; Secondary 47A10, 81Q10.
\end{abstract}


\section{Introduction}\label{sec-intro}

This paper is concerned with the scattering theory (especially the completeness of wave operators) of a class of Schr\"{o}dinger operators with sparse potentials, that is,
\begin{equation}\label{main-model}
H=H_{0}+V\quad\text{on}\quad L^{2}(\R^{d}),
\end{equation}
where $H_{0}=-\De$ is the negative Laplacian and $V$ is the sparse potential of the form $\sum_{n=1}^{\infty}v_{n}(\cdot-x_{n})$,
in which, $\{v_{n}\}_{n\in\N}$ are real-valued, uniformly bounded and  uniformly compactly supported functions on $\R^{d}$, and $\{x_{n}\}_{n\in\N}\subset\R^{d}$ is the sequence satisfying certain sparse condition. 

As an important ingredient of the spectral theory of Schr\"{o}dinger operators, both discrete and continuum Schr\"{o}dinger operators with sparse potentials have attracted a lot of attention (see \cite{De08,HK00,JL03,JP09,Ki02,Kri93,Kru04,KLS98,Mo98,MV99,MV00,Pe79,Po08} and references therein). In particular, in one dimension, if $v_{n}=a_{n}v$ with $v$ being nonzero, nonnegative, continuous and compactly supported and $\{a_{n}\}_{n\in\N}$ satisfying $a_{n}\ra0$ as $n\ra\infty$, and the sparse sequence $\{x_{n}\}_{n\in\N}$ satisfies $\frac{x_{n}}{x_{n+1}}\ra0$ as $n\ra\infty$, then the spectrum is purely absolutely continuous (a.c.) on $\R_{+}(=[0,\infty))$ if $\{a_{n}\}_{n\in\N}\in\ell^{2}(\N)$, and is purely singularly continuous otherwise (see \cite{KLS98}). In higher dimensions, $d\geq3$, the situation is a little different. Hundertmark and Kirsch proved in \cite{HK00} that the a.c. spectrum contains $\R_{+}$ if $\inf_{m\neq n}|x_{m}-x_{n}|\gtrsim n^{\ga}$ for some $\ga>\frac{2}{d(d-2)}$. In \cite{Kru04}, Krutikov obtained similar results under slightly different assumptions. 

In \cite{HK00,Kru04}, the a.c. spectrum containing $\R^{+}$ is established as a result of the existence of wave operators. Their methods are Cook's method in nature. Since Klaus's theorem (see e.g. \cite{CFKS87,HK00,Kla83}) says that Schr\"{o}dinger operators with sparse potentials may have essential spectrum below zero, we can not simply exclude the possible existence of a.c. spectrum below zero, and therefore, we don't expect the completeness of wave operators in general. But, we can still expect the completeness of local wave operators. This is one goal of the current paper. We also know from Klaus's theorem the essential spectrum below zero, if exists, is characterized by the discrete spectrum of $H_{0}+v_{n}$, $n\in\N$. Therefore, under additional assumptions on $\{v_{n}\}_{n\in\N}$, the completeness may be obtained. This is the other goal of the paper.

Consider the Schr\"{o}dinger operator $H$ in \eqref{main-model}. We attempt to prove the existence and completeness of (local) wave operators by means of the smooth method of Kato (see e.g. \cite{Ka65,RS78} and Appendix \ref{smooth-perturb}). By writing
\begin{equation*}
H-H_{0}=|V|^{1/2}V^{1/2},
\end{equation*}
where $V^{1/2}={\sgn}(V)|V|^{1/2}$ so that $V=|V|^{1/2}V^{1/2}$, this requires to show the local $H$-smoothness of $|V|^{1/2}$ and local $H_{0}$-smoothness of $V^{1/2}$, which for short-range potentials comes from the classical limiting absorption principle, which, however, is not expected to be true in our case since the sparse potential considered here may not be decaying. Instead, we want to prove some properties about the resolvents that closely relate to Kato's smooth method. After considering this and the resolvent identity, we define for $\la>0$ and $\ep>0$ the modified free resolvent and the modified perturbed resolvent, that is,
\begin{equation*}
\begin{split}
F(\la+i\ep)&=|V|^{1/2}(H_{0}-\la-i\ep)^{-1}V^{1/2},\\
P(\la+i\ep)&=|V|^{1/2}(H-\la-i\ep)^{-1}V^{1/2}.
\end{split}
\end{equation*}
Our first result is summarized in

\begin{thm}\label{thm-main}
Let $d\geq2$ and consider 
\begin{equation*}
H=H_{0}+V=H_{0}+\sum_{n=1}^{\infty}v_{n}(\cdot-x_{n})\quad\text{on}\quad L^{2}(\R^{d}), 
\end{equation*}
where $\{v_{n}\}_{n\in\N}$ are real-valued, uniformly bounded, i.e., $\sup_{n\in\N}\|v_{n}\|_{\infty}<\infty$, and uniformly compactly supported, i.e., there's some bounded set $\Om\subset\R^{d}$ such that
\begin{equation*}
\supp(v_{n})\subset\Om\quad\text{for all}\,\,n\in\N,
\end{equation*}
and $\{x_{n}\}_{n\in\N}\subset\R^{d}$ satisfies the sparse condition
\begin{equation*}
{\rm dist}\big(x_{n},\{x_{m}\}_{m\neq n}\big)\geq Cn^{\ga},\quad n\in\N
\end{equation*}
for some $C>0$ and $\ga>\frac{2}{d-1}$. Then, the following statements hold:
\begin{itemize}
\item[\rm(i)] For any $0<a<b<\infty$, we have
\begin{equation*}
\sup_{\la\in[a,b],\ep\in(0,1]}\|F(\la+i\ep)\|_{\LLL(L^{2}(\R^{d}))}<\infty\quad\text{and}\quad\sup_{\la\in[a,b],\ep\in(0,1]}\|P(\la+i\ep)\|_{\LLL(L^{2}(\R^{d}))}<\infty,
\end{equation*}
where $\LLL(L^{2}(\R^{d}))$ is the space of bounded linear operators on $L^{2}(\R^{d})$.

\item[\rm(ii)] For any $I=[a,b]\subset(0,\infty)$, the local wave operators
\begin{equation*}
\Om_{\pm}(H,H_{0};I)=s\mbox{-}\lim_{t\ra\pm\infty}e^{iHt}e^{-iH_{0}t}\chi_{I}(H_{0})
\end{equation*}
exist and are complete. In particular, the wave operators 
\begin{equation*}
\Om_{\pm}(H,H_{0})=s\mbox{-}\lim_{t\ra\pm\infty}e^{iHt}e^{-iH_{0}t}
\end{equation*}
exist.
\end{itemize}
\end{thm}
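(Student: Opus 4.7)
The plan is to reduce everything to Kato's smooth perturbation theory, so the heart of the argument is Part~(i): uniform operator-norm bounds on $F(\la+i\ep)$ and $P(\la+i\ep)$; Part~(ii) then follows as a standard consequence.

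For $F(\la+i\ep)$, I would exploit the sparse structure directly. Since $\Om$ is bounded and ${\rm dist}(x_{n},\{x_{m}\}_{m\neq n})\geq Cn^{\ga}\ra\infty$, the translated supports $x_{n}+\Om$ are pairwise disjoint for all $n\geq n_{0}$, where $n_{0}$ depends only on $C$, $\ga$, and ${\rm diam}(\Om)$; the finitely many colliding initial sites can be split off as a bounded, compactly supported perturbation and handled by classical scattering theory at the end. For $n\geq n_{0}$, write $|V|^{1/2}=\sum_{n}A_{n}$ and $V^{1/2}=\sum_{n}B_{n}$ with $A_{n},B_{n}$ uniformly $L^{\infty}$-bounded and supported in $x_{n}+\Om$, giving the block decomposition
\begin{equation*}
F(\la+i\ep)=\sum_{m,n}A_{m}(H_{0}-\la-i\ep)^{-1}B_{n}.
\end{equation*}
The standard pointwise kernel bound for the free resolvent of $-\De$ on $\R^{d}$ (via Hankel functions) gives, for $z=\la+i\ep$ with $\la\in[a,b]\subset(0,\infty)$ and $\ep\in(0,1]$, the estimate $|G_{z}(x,y)|\leq C|x-y|^{-(d-1)/2}$ at large separation together with a locally integrable short-range singularity at $x=y$. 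Hence $\|A_{m}(H_{0}-z)^{-1}B_{n}\|_{\LLL(L^{2})}\leq C|x_{m}-x_{n}|^{-(d-1)/2}$ for $m\neq n$, with uniformly bounded diagonal blocks. A block Schur test then gives uniform boundedness of $F(\la+i\ep)$ once $\sup_{n}\sum_{m\neq n}|x_{m}-x_{n}|^{-(d-1)/2}<\infty$. Using $|x_{m}-x_{n}|\geq C\max(m,n)^{\ga}$, the inner sum is bounded by a constant multiple of $n^{1-\ga(d-1)/2}$, which is uniformly bounded in $n$ (in fact tends to zero) precisely when $\ga>2/(d-1)$, matching the hypothesis.

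For $P(\la+i\ep)$, the second resolvent identity together with $V=|V|^{1/2}V^{1/2}=V^{1/2}|V|^{1/2}$ yields the algebraic identity $(I+F(z))P(z)=F(z)$, so $P(z)=(I+F(z))^{-1}F(z)$ whenever the inverse exists. The main obstacle of the whole argument is establishing uniform invertibility of $I+F(\la+i\ep)$ for $\la\in[a,b]$, $\ep\in(0,1]$, a genuine limiting absorption statement. My plan is to split $F=F_{\mathrm{diag}}+F_{\mathrm{off}}$, with $F_{\mathrm{diag}}=\bigoplus_{n}A_{n}(H_{0}-z)^{-1}B_{n}$ diagonal in the block structure. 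Each diagonal block is essentially the Birman--Schwinger-type operator for the single-site Schr\"odinger operator $H_{0}+v_{n}(\cdot-x_{n})$; the classical LAP for compactly supported bounded perturbations, uniform in $n$ thanks to the uniform bounds on $\|v_{n}\|_{\infty}$ and $\supp v_{n}\subset\Om$, gives uniform invertibility of $I+A_{n}(H_{0}-z)^{-1}B_{n}$ on $[a,b]$. The off-diagonal part I would split further: the tail piece involving only sites of index $\geq N$ has operator norm $O(N^{1-\ga(d-1)/2})\ra 0$ by the summability argument above, while the remaining near piece involves only finitely many sites and is a compact perturbation absorbed via a Fredholm-alternative argument that again reduces to single-site LAP. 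A Neumann-series correction then produces a uniformly bounded $(I+F(z))^{-1}$, and the bound on $P(z)$ follows.

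Part~(ii) is a direct application of the Kato smoothing framework recalled in Appendix~\ref{smooth-perturb}. The uniform bound on $F$ over $I=[a,b]$ is precisely the Kato criterion for local $H_{0}$-smoothness of $V^{1/2}$ on $I$, and the uniform bound on $P$ gives local $H$-smoothness of $|V|^{1/2}$ on $I$. Applying the local Kato--Birman theorem to the symmetric factorization $H-H_{0}=|V|^{1/2}V^{1/2}$ then yields both the existence and completeness of $\Om_{\pm}(H,H_{0};I)$. The unrestricted wave operators $\Om_{\pm}(H,H_{0})$ exist because $H_{0}=-\De$ has purely absolutely continuous spectrum $[0,\infty)$: vectors of the form $\chi_{I}(H_{0})f$ with compact $I\subset(0,\infty)$ are dense in $L^{2}(\R^{d})$, and exhausting $(0,\infty)$ by such intervals extends the strong limit to all of $L^{2}(\R^{d})$ by unitarity of $e^{iHt}e^{-iH_{0}t}$.
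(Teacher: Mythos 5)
Your analysis of $F(z)$ and your Part~(ii) are sound and essentially the paper's own argument: the splitting into same-site and different-site blocks, the free-resolvent kernel bounds (integrable local singularity plus $|x-y|^{-(d-1)/2}$ decay), and the bookkeeping that makes the off-diagonal sums converge exactly when $\ga>\frac{2}{d-1}$ are the paper's estimates, the only inessential difference being that you bound the off-diagonal part by a block Schur test while the paper bounds it globally in Hilbert--Schmidt norm; likewise the reduction of (ii) to Kato local smoothness through the factorization $H-H_{0}=|V|^{1/2}V^{1/2}$, and the passage from local wave operators to existence of $\Om_{\pm}(H,H_{0})$, coincide with the paper's proof.

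The genuine gap is in your treatment of $P(z)$, at the sentence claiming that the classical LAP, ``uniform in $n$ thanks to the uniform bounds on $\|v_{n}\|_{\infty}$ and $\supp v_{n}\subset\Om$,'' gives uniform invertibility of $I+A_{n}(H_{0}-z)^{-1}B_{n}$. Agmon--Kuroda theory does give, for each \emph{fixed} compactly supported bounded $v_{n}$, invertibility of $1+|v_{n}|^{1/2}R_{0}(\la+i0)v_{n}^{1/2}$ and hence a finite constant $C_{n}=\sup_{\la\in[a,b],\ep\in(0,1]}\|(1+|v_{n}|^{1/2}R_{0}(\la+i\ep)v_{n}^{1/2})^{-1}\|$, but nothing in that theory controls $C_{n}$ in terms of $\|v_{n}\|_{\infty}$ and $\Om$ alone: $C_{n}$ is governed by how close $H_{0}+v_{n}$ comes to having a positive-energy resonance near $[a,b]$, and an $L^{\infty}$ ball of potentials supported in $\Om$ is not compact in any topology for which these inverses depend continuously on the potential, so no soft argument yields $\sup_{n}C_{n}<\infty$; a uniform bound over such a family is a quantitative (``effective'') limiting absorption/resonance-free-strip statement requiring Carleman-type estimates, not a citation of the classical LAP. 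Since both your Neumann-series correction and your treatment of the off-diagonal tail lean on $\sup_{n}\|(1+F_{nn}(z))^{-1}\|<\infty$, the bound on $P(z)$ is left unproven. A second, related soft spot: your Fredholm-alternative step for the finitely many ``near'' sites needs injectivity of the full boundary operator $1+F(\la+i0)$, and this does not ``reduce to single-site LAP'' --- $V$ is not short range, so the usual radiation-condition/Rellich uniqueness argument is unavailable for the infinite sum. The paper avoids both issues by never decoupling the sites: it proves invertibility of $1+F(z)$ for each $z$ with $\Im z>0$ directly from the bounded invertibility of $H-z$, obtains invertibility of the boundary operators $1+F(\la+i0)$ by an approximate-inverse comparison with $1+F(\la+i\ep)$ for small $\ep$, shows $z\mapsto(1+F(z))^{-1}$ is continuous on the compact set $\ol{\SS}$, and then gets $\sup_{z}\|P(z)\|<\infty$ from $P(z)(1+F(z))=F(z)$ by compactness, so no uniform-in-$n$ single-site estimate is ever required.
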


Theorem \ref{thm-main}$\rm(i)$ is the analog of the classical limiting absorption principle (see e.g. \cite{Ag75,RS78}), which, for short-range potentials, implies the supremums as in Theorem \ref{thm-main}$\rm(i)$. Therefore, we here actually obtain a weak form. It's easy to see that this weak form closely relates to Kato's smooth method, and therefore, Theorem \ref{thm-main}$\rm(ii)$ is almost a direct consequence of Theorem \ref{thm-main}$\rm(i)$. Note that while the results in \cite{HK00,Kru04} hold for $d\geq3$, our results are also true in the dimension $d=2$, which, for continuum models, seems unknown, even for the existence of wave operators.

Next, we put more conditions on $\{v_{n}\}_{n\in\N}$, which includes the case that $v_{n}$, $n\in\N$ are the same, to ensure the completeness of wave operators. 

\begin{thm}\label{thm-main-1}
Let the assumptions of Theorem \ref{thm-main} be satisfied. Suppose further that $\{v_{n}\}_{n\in\N}$ satisfies one of the following two conditions:
\begin{itemize}
\item[\rm(i)] $\|v_{n}\|_{\infty}\ra0$ as $n\ra\infty$, that is, $V$ is decaying;
\item[\rm(ii)] the set $\EE$ has Lebesgue measure zero, where 
\begin{equation*}
\EE=\big\{E<0\big|\exists n_{k}\,\,\text{and}\,\,E_{n_{k}}\in\si(H_{0}+v_{n_{k}})\,\,\text{s.t.}\,\,E_{n_{k}}\ra E\,\,\text{as}\,\,k\ra\infty\big\}.
\end{equation*}
\end{itemize} 
Then, the wave operators $\Om_{\pm}(H,H_{0})$ exist and are complete. In particular, $\si_{ac}(H)=\R_{+}$.
\end{thm}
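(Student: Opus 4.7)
The plan is to leverage Theorem~\ref{thm-main}(ii), which already gives existence of $\Om_\pm(H,H_0)$ and completeness of every local wave operator $\Om_\pm(H,H_0;[a,b])$ for $[a,b]\subset(0,\infty)$. Because $H_0=-\De$ has spectrum $[0,\infty)$ with no eigenvalues, $\chi_{(0,\infty)}(H_0)=I$, so exhausting $(0,\infty)$ by $I_n=[1/n,n]$ and using the factorization $\Om_\pm(H,H_0;I_n)=\Om_\pm(H,H_0)\chi_{I_n}(H_0)$ produces
\begin{equation*}
\range\big(\Om_\pm(H,H_0)\big)=\ol{\bigcup_n \chi_{I_n}(H)P_{ac}(H)L^2(\R^d)}=\chi_{(0,\infty)}(H)P_{ac}(H)L^2(\R^d).
\end{equation*}
Completeness of $\Om_\pm(H,H_0)$ is therefore equivalent to $P_{ac}(H)\chi_{(-\infty,0]}(H)=0$, i.e., to the absence of absolutely continuous spectrum of $H$ in $(-\infty,0]$.

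I would first reduce condition (i) to condition (ii). Under (i), the bound $H_0+v_n\geq -\|v_n\|_\infty$ places $\si(H_0+v_n)\cap(-\infty,0)$ inside $[-\|v_n\|_\infty,0)$, so any sequence of negative eigenvalues chosen with $n_k\to\infty$ can accumulate only at $0$; the negative eigenvalues of each individual $H_0+v_n$ are themselves at most countable and accumulate only at $0$ (since each $v_n$ is compactly supported and bounded, hence $H_0$-compact). Thus $\EE$ is at most countable, $|\EE|=0$, and (i) is subsumed by (ii).

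Under (ii), Klaus's theorem (cited in the introduction) yields $\si_{ess}(H)\cap(-\infty,0)\subset\EE$, so $|\si_{ess}(H)\cap(-\infty,0]|=0$. For any $\psi\in P_{ac}(H)\chi_{(-\infty,0]}(H)L^2(\R^d)$, the spectral measure $\mu_\psi$ is absolutely continuous with respect to Lebesgue measure and supported in $\si(H)\cap(-\infty,0]$; the point part of $\si(H)$ contributes nothing to $\mu_\psi$ by absolute continuity, while the essential part lies in a Lebesgue-null set, forcing $\mu_\psi\equiv 0$ and hence $\psi=0$. This gives $P_{ac}(H)\chi_{(-\infty,0]}(H)=0$ and, combined with the first paragraph, completeness of $\Om_\pm(H,H_0)$. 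The identity $\si_{ac}(H)=\R_+$ then follows at once from the intertwining $H\Om_\pm=\Om_\pm H_0$ and the unitarity of $\Om_\pm:L^2(\R^d)\to\range(\Om_\pm)=P_{ac}(H)L^2(\R^d)$. The step requiring the most care is extracting the inclusion $\si_{ess}(H)\cap(-\infty,0)\subset\EE$ in the form needed from the classical formulations of Klaus's theorem; the remainder is orchestration of Theorem~\ref{thm-main} with standard facts about spectral measures.
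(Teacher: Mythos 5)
Your proposal is correct, but it takes a genuinely different route from the paper. The paper proves the theorem by invoking the global smooth-perturbation criterion (Proposition \ref{app-wave-op}): it covers $(0,\infty)$ by compact intervals on which $|V|^{1/2}$ is $H$-smooth and $V^{1/2}$ is $H_{0}$-smooth (from the proof of Theorem \ref{thm-main}), and then verifies the hypothesis $|\si(H)\bs(0,\infty)|=0$ — under (i) directly via Weyl's theorem (decaying $V$ gives $\si_{\rm ess}(H)=\R_{+}$), under (ii) via Klaus's theorem $\si_{\rm ess}(H)=\EE\cup\R_{+}$ with $|\EE|=0$. You instead bypass Proposition \ref{app-wave-op} entirely: you take the already-established local completeness of Theorem \ref{thm-main}(ii), exhaust $(0,\infty)$ by $I_{n}=[1/n,n]$, use $\Om_{\pm}(H,H_{0};I_{n})=\Om_{\pm}(H,H_{0})\chi_{I_{n}}(H_{0})$ and the closedness of ${\range}\,\Om_{\pm}$ to identify ${\range}\,\Om_{\pm}(H,H_{0})=\chi_{(0,\infty)}(H)P_{ac}(H)L^{2}$, and then reduce completeness to the absence of a.c. spectrum in $(-\infty,0]$, which follows from $|\si(H)\cap(-\infty,0]|=0$ exactly as in the paper (Klaus's theorem plus countability of the discrete spectrum). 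You also treat (i) differently, subsuming it under (ii) by showing $\EE$ is countable when $\|v_{n}\|_{\infty}\ra0$ (limits with $n_{k}\ra\infty$ can only be $0$, and each $\si(H_{0}+v_{n})\cap(-\infty,0)$ is a discrete, hence countable, set since $v_{n}$ is $H_{0}$-compact), whereas the paper appeals to Weyl's theorem; both are valid. The one point to state carefully in your write-up is that ``completeness'' of the local wave operators is used in the range sense ${\range}\,\Om_{\pm}(H,H_{0};I)=\chi_{I}(H)P_{ac}(H)L^{2}$, which is what Proposition \ref{app-local-wave-op} (Reed--Simon XIII.7) delivers; granted that, your exhaustion argument is a clean alternative that needs only the local criterion, at the cost of redoing by hand the globalization step that Proposition \ref{app-wave-op} packages.
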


The assumption$\rm(ii)$ in the statement of Theorem \ref{thm-main-1} is made according to Klaus's theorem (see e.g. \cite{CFKS87,HK00,Kla83}). We remark that the set $\EE$ can have positive Lebesgue measure (see Remark \ref{rem-final}$\rm(i)$).

As an immediate result of Theorem \ref{thm-main-1}, we obtain

\begin{thm}\label{cor-main}
Let $d\geq2$. Consider the Schr\"{o}dinger operator 
\begin{equation*}
H=H_{0}+\sum_{n=1}^{\infty}v(\cdot-x_{n})\quad\text{on}\quad L^{2}(\R^{d}),
\end{equation*}
where $v:\R^{d}\ra\R$ is bounded and compactly supported, and $\{x_{n}\}_{n\in\N}\subset\R^{d}$ satisfies the sparse condition
\begin{equation*}
{\rm dist}\big(x_{n},\{x_{m}\}_{m\neq n}\big)\geq Cn^{\ga},\quad n\in\N
\end{equation*}
for some $C>0$ and $\ga>\frac{2}{d-1}$. Then, the wave operators $\Om_{\pm}(H,H_{0})$ exist and are complete.
\end{thm}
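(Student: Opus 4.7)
The plan is to deduce Theorem \ref{cor-main} as an immediate special case of Theorem \ref{thm-main-1}. In the setting of the corollary, $v_{n}=v$ for every $n$, so the uniform boundedness and uniform compactness-of-support hypotheses of Theorem \ref{thm-main} hold trivially; the sparse condition on $\{x_{n}\}_{n\in\N}$ is assumed directly. Hence Theorem \ref{thm-main-1} applies as soon as one verifies either condition $\rm(i)$ or $\rm(ii)$ in its statement. Condition $\rm(i)$ plainly fails when $v\not\equiv0$, so my task reduces to verifying condition $\rm(ii)$: the set $\EE$ of points $E<0$ that arise as limits of spectral values of the operators $H_{0}+v_{n_{k}}$ has Lebesgue measure zero.

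Because every $v_{n_{k}}$ coincides with $v$, each spectrum $\si(H_{0}+v_{n_{k}})$ equals the single set $\si(H_{0}+v)$, so $\EE$ reduces to the set of negative accumulation points of $\si(H_{0}+v)$. I then invoke Weyl's theorem on the invariance of the essential spectrum under relatively compact perturbations: since $v$ is bounded and compactly supported, $v(H_{0}+1)^{-1}$ is compact (by a standard Rellich-type argument, the resolvent $(H_{0}+1)^{-1}$ composed with multiplication by a bounded, compactly supported function is compact on $L^{2}(\R^{d})$). Consequently $\si_{\rm ess}(H_{0}+v)=\si_{\rm ess}(H_{0})=[0,\infty)$, so $\si(H_{0}+v)\cap(-\infty,0)$ consists entirely of isolated eigenvalues of finite multiplicity whose only possible accumulation point in $\R$ is $0$.

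This structural description immediately implies that any $E<0$ belonging to $\EE$ must itself be an eigenvalue of $H_{0}+v$ (a sequence in a set which is discrete away from $0$ and converges to $E<0$ is eventually constant equal to $E$). Thus $\EE$ is at most countable and has Lebesgue measure zero. Applying Theorem \ref{thm-main-1}$\rm(ii)$ then yields existence and completeness of $\Om_{\pm}(H,H_{0})$, together with $\si_{ac}(H)=\R_{+}$. There is no genuine obstacle here: the corollary is a direct application of Theorem \ref{thm-main-1}, and the only step requiring any thought is the classical fact that a bounded, compactly supported perturbation of $-\De$ has purely discrete negative spectrum with at most $0$ as an accumulation point.
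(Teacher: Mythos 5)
Your proof is correct and takes essentially the same route as the paper: the paper's own proof of Theorem \ref{cor-main} likewise invokes Theorem \ref{thm-main-1}$\rm(ii)$ after observing that, since all $v_{n}=v$, the set $\EE$ is just $\si_{\rm disc}(H_{0}+v)$ and hence has Lebesgue measure zero. Your verification via Weyl's theorem that the negative spectrum of $H_{0}+v$ is discrete (so that $\EE$ is at most countable) merely spells out the detail the paper leaves implicit.
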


Here, we want to point out the work of Poulin (see \cite{Po08}) and Jak\v{s}i\'{c} and Poulin (see \cite{JP09}) for discrete Schr\"{o}dinger operators with random sparse potentials. With the presence of randomness, one is only interested in almost sure properties, and therefore, the completeness of wave operators is always expected. This is indeed a big difference between deterministic and random operators. Actually, both the above two papers obtain the existence and completeness of wave operators. In terms of the methods, while both papers are based on the Jak\v{s}i\'{c}-Last criterion of completeness (see \cite{JL03}), which holds only for discrete models,  the former uses probabilistic arguments and the later uses analytic Fredholm theorem and is deterministic in nature. 

There's a random version of of the operator \eqref{main-model}, that is, 
\begin{equation*}
H_{\om}=H_{0}+\sum_{n=1}^{\infty}\om_{n}v_{n}(\cdot-x_{n}),
\end{equation*}
where $\{\om_{n}\}_{n\in\N}$ are random variables. The existence of wave operators has been established in \cite{Kru04}. Due to the present of randomness, we also expect the completeness. Moreover, considering the localization phenomenon, the scenario for $H_{\om}$ shoud be: almost surely, wave operators exsit and are complete, and outside $\R_{+}$, the spectrum is pure point. This has been established for discrete models (see \cite{Po08}), and therefore, it is believed to be true for continuum models. We hope to prove it in our futher work.

The rest of the paper is organized as follows. In Section \ref{sec-settings}, we give some basic settings about the model. Section \ref{sec-free} and \ref{sec-perturbed} are the main parts of the paper. In Section \ref{sec-free}, we establish the limiting absorption principle of $F(z)$. In Section \ref{sec-perturbed}, we first show the invertibility of $1+F(z)$, then prove the uniform boundedness of $(1+F(z))^{-1}$, and finally, establish the limiting absorption principle of $P(z)$. The final section, Section \ref{sec-proof-main-results}, is devoted to the proof of Theorem \ref{thm-main}, Theorem \ref{thm-main-1} and Theorem \ref{cor-main}. We also make some remarks about the resuts of the current paper there.

Throughout the paper, we use the following notations: for two nonnegative numbers $a$ and $b$, by writing $a\lesssim b$ we mean there's some $C>0$ such that $a\leq Cb$; similarly, we also use $a\gtrsim b$; $B_{R}(x)$ denotes the open ball in $\R^{d}$ centered at $x$ with radius $R$; its closure is denoted by $\bar{B}_{R}(x)$; we write $B_{R}$ and $\bar{B}_{R}$ for $B_{R}(0)$ and $\bar{B}_{R}(0)$, respectively; $\chi_{B}$ is the characteristic function of $B\subset\R^{d}$; the norm and the inner product on $L^{2}=L^{2}(\R^{d})$ are denoted by $\|\cdot\|$ and $\lan\cdot,\cdot\ran$; for two Banach spaces $X$ and $Y$, we denote by $\LLL(X,Y)$ the space of all bounded linear operators from $X$ to $Y$ and the norm on $\LLL(X,Y)$ is denoted by $\|\cdot\|=\|\cdot\|_{\LLL(X,Y)}$; if $X=Y$, we write $\LLL(X)=\LLL(X,Y)$; the set $\N$ stands for positive integers;  $L^{\infty}=L^{\infty}(\R^{d})$ with norm $\|\cdot\|_{\infty}$; $H^{2}=H^{2}(\R^{d})$; $\ell^{p}=\ell^{p}(\N)$.


\section{Basic Settings}\label{sec-settings}

This section serves as a preparation for the proof of main results. As in the assumptions of Theorem \ref{thm-main}, $\{v_{n}\}_{n\in\N}$ are real-valued and satisfy $\sup_{n\in\N}\|v_{n}\|_{\infty}<\infty$ and $\cup_{n\in\N}\supp(v_{n})\subset\Om$ for some bounded set $\Om\subset\R^{d}$. The sequence $\{x_{n}\}_{n\in\N}\subset\R^{d}$ satisfies the sparse condition
\begin{equation}\label{sparseness-2}
{\rm dist}\big(x_{n},\{x_{m}\}_{m\neq n}\big)\gtrsim n^{\ga},\quad n\in\N
\end{equation}
for some $\ga>\frac{2}{d-1}$. Then, the sparse potential
\begin{equation*}
V=\sum_{n=1}^{\infty}v_{n}(\cdot-x_{n})
\end{equation*}
is real-valued and bounded, and hence, Kato-Rellich theorem (see e.g. \cite[Theorem X.12]{RS75}) ensures that $H=H_{0}+V=-\De+V$ defines a lower bounded self-adjoint operator on $L^{2}$ with domain $H^{2}$.

Let $R>0$ be such that $\supp(v_{n})\subset \bar{B}_{R}$ for all $n\in\N$.
Also, for $n\in\N$, let 
\begin{equation*}
\Si_{n}=\supp(v_{n}(\cdot-x_{n})). 
\end{equation*}
Then, $\Si_{n}\subset\bar{B}_{R}(x_{n})$. We will use the following simple result.

\begin{lem}\label{lem-tech-1}
For any $m,n\in\N$, there holds
\begin{equation*}
\Si_{m}\times\Si_{n}\subset \bar{B}_{R}(x_{m})\times\bar{B}_{R}(x_{n})\subset\bar{B}_{\sqrt{2}R}(x_{m},x_{n})\subset\Big\{(x,y)\in\R^{d}\times\R^{d}\Big|\big||x-y|-|x_{m}-x_{n}|\big|\leq2R\Big\}.
\end{equation*}
\begin{proof}
The first two inclusions are trivial. If $(x,y)\in\bar{B}_{\sqrt{2}R}(x_{m},x_{n})$, then
\begin{equation*}
\begin{split}
|x-y|&\leq|x-x_{m}|+|x_{m}-x_{n}|+|x_{n}-y|\\
&\leq\sqrt{2}|(x,y)-(x_{m},x_{n})|+|x_{m}-x_{n}|\\
&\leq2R+|x_{m}-x_{n}|.
\end{split}
\end{equation*}
Similarly, $|x_{m}-x_{n}|\leq2R+|x-y|$. This proves the third one.
\end{proof}
\end{lem}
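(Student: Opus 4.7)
The statement is a purely elementary geometric fact, so the plan is just to verify the three inclusions in order and explain where each factor of $R$ or $\sqrt{2}R$ comes from. No deep tools are needed; the only point requiring care is the correct use of the Euclidean product metric on $\R^{d}\times\R^{d}$.

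First I would handle the inclusion $\Si_{m}\times\Si_{n}\subset \bar{B}_{R}(x_{m})\times\bar{B}_{R}(x_{n})$, which is immediate: by definition $\Si_{n}=\supp(v_{n}(\cdot-x_{n}))$, and since $\supp(v_{n})\subset\bar{B}_{R}$ by the normalization choice of $R$, translation invariance of support gives $\Si_{n}\subset\bar{B}_{R}(x_{n})$; taking the Cartesian product with the analogous statement for $m$ finishes this step.

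Next I would establish the inclusion $\bar{B}_{R}(x_{m})\times\bar{B}_{R}(x_{n})\subset\bar{B}_{\sqrt{2}R}(x_{m},x_{n})$. Given $(x,y)$ in the left-hand side, the Euclidean distance in $\R^{d}\times\R^{d}$ satisfies
\begin{equation*}
|(x,y)-(x_{m},x_{n})|^{2}=|x-x_{m}|^{2}+|y-x_{n}|^{2}\leq R^{2}+R^{2}=2R^{2},
\end{equation*}
so $(x,y)\in\bar{B}_{\sqrt{2}R}(x_{m},x_{n})$. This is where the factor $\sqrt{2}$ comes in, and it is the only spot where one has to be careful about which norm is being used.

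Finally, for the last inclusion I would use the triangle inequality together with a Cauchy--Schwarz-type bound $|x-x_{m}|+|y-x_{n}|\leq\sqrt{2}\,|(x,y)-(x_{m},x_{n})|\leq 2R$. Applying the triangle inequality to $|x-y|$ by inserting $x_{m}$ and $x_{n}$ gives $|x-y|\leq|x-x_{m}|+|x_{m}-x_{n}|+|x_{n}-y|\leq 2R+|x_{m}-x_{n}|$, and the symmetric application (inserting $x$ and $y$ into $|x_{m}-x_{n}|$) gives $|x_{m}-x_{n}|\leq 2R+|x-y|$. Together these yield $\bigl||x-y|-|x_{m}-x_{n}|\bigr|\leq 2R$, which is exactly the right-hand set. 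There is no real obstacle here; the lemma is a convenient bookkeeping statement that will presumably be used later to localize integrations onto annuli where $|x-y|$ is comparable to the (large) separation $|x_{m}-x_{n}|$, which is the geometric ingredient that the sparsity hypothesis is meant to exploit.
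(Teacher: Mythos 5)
Your proof is correct and follows essentially the same route as the paper: the first two inclusions by definition of $\Si_{n}$ and the Pythagorean computation in $\R^{d}\times\R^{d}$, and the third via the triangle inequality combined with $|x-x_{m}|+|y-x_{n}|\leq\sqrt{2}\,|(x,y)-(x_{m},x_{n})|\leq 2R$, exactly as in the paper's argument. The only difference is that you spell out the steps the paper labels ``trivial,'' which is fine.
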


\begin{rem}\label{rem-tech-1}
Since, by Kuroda-Birman theorem (see e.g. \cite[Theorem XI.9]{RS79}), only the behavior of $V$ at infinity matters when we study the scattering theory of $H$, we may drop $v_{n}(\cdot-x_{n})$, $n=1,\dots,N-1$ for some $N$ large. For later use, this large $N$ is choosen to satisfy
\begin{itemize}
\item[\rm(i)] $\Si_{n}$, $n\geq N$ are pairwise disjoint;
\item[\rm(ii)] $(\Si_{m}\times\Si_{n})\cap\big\{(x,y)\in\R^{d}\times\R^{d}\big||x-y|\leq2R\big\}=\emptyset$ for any $m,n\geq N$ with $m\neq n$.
\end{itemize}
Condition $\rm(i)$ is easily satisfied by the sparseness condition \eqref{sparseness-2}, which together with Lemma \ref{lem-tech-1}, ensures $\rm(ii)$. Also, $\rm(ii)$ simply means that if $m\neq n$, then $\Si_{m}\times\Si_{n}$ is away from the diagonal $\big\{(x,y)\in\R^{d}\times\R^{d}\big|x=y\big\}$.
\end{rem}

From now on, we fix some large $N$ as in the above remark and write
\begin{equation*}
V=\sum_{n=N}^{\infty}v_{n}(\cdot-x_{n})\quad\text{and}\quad \Si=\bigcup_{n=N}^{\infty}\Si_{n}.
\end{equation*}

For $z\in\C\bs\R$, we set $R_{0}(z)=(H_{0}-z)^{-1}$ and $R(z)=(H-z)^{-1}$, the free resolvent and the perturbed resolvent. It is well-known (see e.g. \cite[page 288]{GS77}) that $R_{0}(z)$ is an integral operator with an explicit integral kernel given by
\begin{equation}\label{int-kernel-free}
k_{0,z}(x,y)=k_{0,z}(|x-y|)=c_{d}\bigg(\frac{\sqrt{z}}{|x-y|}\bigg)^{(d-2)/2}K_{(d-2)/2}(-i\sqrt{z}|x-y|),
\end{equation}
where $c_{d}$ is a constant depends only on $d$,  $\sqrt{z}$ satisfies $\Im\sqrt{z}>0$ and $K_{\bullet}$ is the Bessel potential. In particular, if $d=3$, then $k_{0,z}(x,y)=\frac{e^{i\sqrt{\la}|x-y|}}{4\pi|x-y|}$. Using Kato's smooth method to prove the existence and completeness of wave operators, the continuous extension of $R_{0}(z)$ and $R(z)$ to $\R_{+}$ plays a crucial role. For $R_{0}(z)$, we have the well-known limiting absorption principle, that is, for $z\in\C\bs\R$ with $\Re z>0$ and $\si>\frac{1}{2}$
\begin{equation*}
\|R_{0}(z)\|_{\LLL(L^{2}_{\si},L^{2}_{-\si})}\leq C_{\si}(\Re z)^{-1/2},
\end{equation*}
where $C_{\si}>0$ depends only on $\si$ and $L^{2}_{s}=\{\phi|\|(1+|\cdot|^{2})^{s/2}\phi\|<\infty\}$ is the weighted space. As for $R(z)$ with short-range $V$, i.e., $(1+|\cdot|)^{1+\ep}V\in L^{\infty}$ for some $\ep>0$, Agmon's fundamental work (see \cite{Ag75}) established the following bound
\begin{equation}\label{lap-perturbed}
\sup_{\Re z>\la,\Im z>0}\|R(z)\|_{\LLL(L^{2}_{\si},L^{2}_{-\si})}<\infty
\end{equation}
provided $\la>0$ and $\si>\frac{1}{2}$. Later, similar results were obtained for other classes of decaying potentials (see e.g. \cite{GS04,IS06,RS78,Ya00}). 

However, the potential $V$ discussed in the current paper may not decay fast enough at infinity or even not decay, and therefore,  \eqref{lap-perturbed} is not expected here. To overcome this difficulty, we modify both the free resolvent $R_{0}(z)$ and the perturbed resolvent $R(z)$ by defining the following operators: for $z\in\C\bs\R$, we set
\begin{equation*}
\begin{split}
F(z)&=|V|^{1/2}R_{0}(z)V^{1/2},\\
P(z)&=|V|^{1/2}R(z)V^{1/2},
\end{split}
\end{equation*}
where $V^{1/2}={\sgn}(V)|V|^{1/2}=|V|^{1/2}{\sgn}(V)$ so that $V=V^{1/2}|V|^{1/2}$. Since $V$ is bounded, both $F(z)$ and $P(z)$ are bounded on $L^{2}$. By means of the resolvent identity $R(z)-R_{0}(z)=-R(z)VR_{0}(z)$, we get
\begin{equation*}
|V|^{1/2}R(z)V^{1/2}-|V|^{1/2}R_{0}(z)V^{1/2}=-|V|^{1/2}R(z)V^{1/2}|V|^{1/2}R_{0}(z)V^{1/2},
\end{equation*}
that is,
\begin{equation*}
P(z)(1+F(z))=F(z).
\end{equation*}
Therefore, the limiting absorption principle, i.e. Theorem \ref{thm-main}$\rm(i)$, may be established if enough information about $F(z)$ can be acquired.

In the sequel, we fix $0<a<b<\infty$ and set
\begin{equation*}
\begin{split}
\SS&=\Big\{z=\la+i\ep\in\C_{+}\Big|\la\in[a,b]\,\,\text{and}\,\,\ep\in(0,1]\Big\},\\
\ol{\SS}&=\Big\{z=\la+i\ep\in\C_{+}\Big|\la\in[a,b]\,\,\text{and}\,\,\ep\in[0,1]\Big\}.
\end{split}
\end{equation*}
In the following two sections, we prove for $z\in\SS$ the uniform boundedness of $F(z)$ and of $(1+F(z))^{-1}$, which then allows us to establish the uniform boundedness of $P(z)$. 


\section{Analysis of $F(z)$}\label{sec-free}

Recall that for $z\in\SS$, $F(z)$ is defined by
\begin{equation*}
F(z)=|V|^{1/2}R_{0}(z)V^{1/2}.
\end{equation*}
Since $|V|^{1/2}$ and $V^{1/2}$ are integral operators with integral kernels $|V|^{1/2}(x)\de(x-y)$ and $V^{1/2}(x)\de(x-y)$, respectively, $F(z)$ is also an integral operator with integral kernel
\begin{equation}\label{integral-kernel}
\begin{split}
k_{F(z)}(x,y)&=\int_{\R^{d}}\int_{\R^{d}}|V|^{1/2}(x)\de(x-u)k_{0,z}(u,v)V^{1/2}(v)\de(v-y)dudv\\
&=|V|^{1/2}(x)k_{0,z}(x,y)V^{1/2}(y),
\end{split}
\end{equation}
where $k_{0,z}(x,y)$ is the integral kernel of $R_{0}(z)$ given in \eqref{int-kernel-free}. We recall the following property of $k_{0,z}(x,y)$ (see e.g. \cite[page 418]{IS06}), which comes from the standard estimates of Bessel potentials. 
\begin{lem}\label{estimate-of-kernel}
There's a constant $C=C(a,b)$ such that for any $z\in\ol{\SS}$
\begin{equation*}
\begin{split}
|k_{0,z}(x,y)|=|k_{0,z}(|x-y|)|\leq \left\{ \begin{aligned}
C\ln\bigg(\frac{2}{|x-y|}\bigg),&\quad\text{if}\,\,|x-y|\leq2R\,\,\text{and}\,\,d=2,\\
\frac{C}{|x-y|^{d-2}},&\quad\text{if}\,\,|x-y|\leq2R\,\,\text{and}\,\,d\geq3,\\
\frac{C}{|x-y|^{(d-1)/2}},&\quad\text{if}\,\,|x-y|>2R\,\,\text{and}\,\,d\geq2.
\end{aligned} \right.
\end{split}
\end{equation*}
\end{lem}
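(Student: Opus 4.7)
The plan is to deduce the three-case bound from the standard small- and large-argument asymptotics of the modified Bessel function $K_{\nu}$, exploiting the uniformity provided by $z\in\ol{\SS}$. First, I observe that for $z=\la+i\ep\in\ol{\SS}$, the principal branch $\sqrt{z}$ (with $\Im\sqrt{z}\geq 0$) lies in a compact subset of the closed upper half-plane that is bounded away from the origin; quantitatively, $|\sqrt{z}|$ and $1/|\sqrt{z}|$ are controlled by constants depending only on $a$ and $b$. Setting $w=-i\sqrt{z}|x-y|$, I have $|w|\sim|x-y|$ (with implicit constants depending only on $a,b$) and $\Re w=|x-y|\Im\sqrt{z}\geq 0$, so $w$ lies in the right half-plane where the classical asymptotic expansions for $K_{\nu}$ apply.

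In the regime $|x-y|>2R$, I invoke the large-argument expansion $K_{\nu}(w)=\sqrt{\pi/(2w)}\,e^{-w}(1+O(|w|^{-1}))$, which holds uniformly for $|w|$ bounded below in the right half-plane. Since $|w|\gtrsim R$ and $|e^{-w}|=e^{-\Re w}\leq 1$, this gives $|K_{(d-2)/2}(w)|\lesssim|w|^{-1/2}$. Multiplying by the prefactor $(\sqrt{z}/|x-y|)^{(d-2)/2}$ then produces the claimed $|x-y|^{-(d-1)/2}$ decay.

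In the regime $|x-y|\leq 2R$, I split on dimension. For $d\geq 3$, I have $\nu=(d-2)/2>0$ and use the small-argument behavior $K_{\nu}(w)\sim\tfrac{1}{2}\Gamma(\nu)(w/2)^{-\nu}$; combined with the prefactor this yields the singularity $|x-y|^{-(d-2)}$. For $d=2$, the prefactor equals $1$, the order is $\nu=0$, and the expansion $K_{0}(w)=-\ln(w/2)-\gamma+O(|w|^{2}\ln|w|)$ delivers a bound of the form $\ln(2/|x-y|)+C$, which after enlarging the constant is absorbed into the stated $C\ln(2/|x-y|)$ (the remainder stays bounded since $|x-y|$ ranges over a bounded set). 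The transition region near $|x-y|=2R$, where $K_{\nu}$ is smooth and bounded, is handled by further inflating $C$. The bulk of the work is really just invoking standard Bessel function estimates; the only point requiring care is ensuring that all implicit constants depend solely on $a,b$ (and the fixed $d,R$), which follows from the compactness of $\ol{\SS}$ together with the explicit form of these asymptotics.
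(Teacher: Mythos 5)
Your argument is correct and is essentially the paper's own: the paper does not prove this lemma but simply cites the standard Bessel-function estimates (see \cite[page 418]{IS06}), and your small-argument and large-argument asymptotics for $K_{(d-2)/2}$, made uniform in $z$ by the compactness of $\ol{\SS}$ and the fact that $w=-i\sqrt{z}|x-y|$ stays in the closed right half-plane with $|w|\sim|x-y|$, are precisely those estimates. One cosmetic caveat, inherited from the statement rather than from your proof: absorbing the additive constant into $C\ln(2/|x-y|)$ when $d=2$ requires $\ln(2/|x-y|)$ to be bounded below by a positive constant, which fails near $|x-y|=2$ if $R\geq1$, so on that transition region the bound should be read as $C(1+|\ln|x-y||)$ (equivalently, adjust the constant inside the logarithm), which is how it is used later when the paper integrates $|\ln(2/|x|)|$ over $\bar{B}_{2R}$.
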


We now prove the uniform boundedness of $F(z)$ for $z\in\SS$.

\begin{thm}\label{lem-free-res}
$F(z)$ is uniformly bounded on $\SS$, that is,
\begin{equation*}
\sup_{z\in\SS}\|F(z)\|<\infty.
\end{equation*}
\end{thm}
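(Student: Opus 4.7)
The plan is to apply Schur's test to the integral operator $F(z)$. Since $V$ is uniformly bounded with support in $\Si$, formula \eqref{integral-kernel} gives the pointwise majoration
$$|k_{F(z)}(x,y)| \leq C\,\chi_{\Si}(x)\,|k_{0,z}(x,y)|\,\chi_{\Si}(y),$$
where $C$ depends only on $\sup_{n}\|v_{n}\|_{\infty}$. This majorant is symmetric in $(x,y)$, so by Schur's test it suffices to prove
$$M := \sup_{z\in\SS}\,\sup_{x\in\Si}\int_{\Si}|k_{0,z}(x,y)|\,dy < \infty.$$

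Fix $x\in\Si_{m}$ for some $m\geq N$ and decompose the integral as $\sum_{n\geq N}\int_{\Si_{n}}$. For the diagonal term $n=m$, the domain lies in $\bar{B}_{R}(x_{m})$, so $|x-y|\leq 2R$ and Lemma \ref{estimate-of-kernel} supplies either a logarithmic ($d=2$) or an $|x-y|^{-(d-2)}$ ($d\geq 3$) bound; both are integrable over $\bar{B}_{2R}(x)$, so this term is bounded uniformly in $m$ and $z\in\SS$. For each off-diagonal term $n\neq m$ with $n\geq N$, Remark \ref{rem-tech-1}(ii) guarantees $|x-y|>2R$ on $\Si_{m}\times\Si_{n}$, so the far-field bound $|k_{0,z}(x,y)|\leq C|x-y|^{-(d-1)/2}$ from Lemma \ref{estimate-of-kernel} applies. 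Combining Lemma \ref{lem-tech-1} with the sparseness condition \eqref{sparseness-2} yields $|x-y|\geq|x_{m}-x_{n}|-2R\gtrsim n^{\ga}$ for all $n$ larger than a threshold that is independent of $m$. Since $|\Si_{n}|\leq|B_{R}|$ is a fixed constant, the $n$-th off-diagonal contribution is $\lesssim n^{-\ga(d-1)/2}$.

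Summing, the off-diagonal tail is bounded by $\sum_{n}n^{-\ga(d-1)/2}$, which converges precisely because the hypothesis $\ga>\frac{2}{d-1}$ forces $\ga(d-1)/2>1$. All constants produced above are independent of $m$ and $z\in\SS$, so Schur's test gives $\|F(z)\|\leq M<\infty$ uniformly. The main obstacle is exactly this last summation: the far-field decay rate $(d-1)/2$ of the free resolvent kernel is fixed by the Bessel asymptotics, while the effective number of sparse centers within distance $r$ grows like $r^{1/\ga}$, and one needs the sparse separation $n^{\ga}$ to strictly outpace the decay once summed. The threshold $\ga>\frac{2}{d-1}$ is essentially sharp for this Schur-based argument. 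A minor secondary issue is the $d=2$ logarithmic singularity in the diagonal piece, which is handled by direct integration in polar coordinates.
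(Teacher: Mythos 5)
Your proof is correct, and for the harder (off-diagonal) part it takes a genuinely different route from the paper's. The paper uses the same near/far decomposition of the kernel into the diagonal blocks $\Si_{n}\times\Si_{n}$ and the off-diagonal blocks $\Si_{m}\times\Si_{n}$, $m\neq n$, but it estimates the bilinear form $\lan\phi,F(z)\psi\ran$ directly: the diagonal piece is handled by a weighted Cauchy--Schwarz argument that is essentially your Schur bound on each block (with the same integrable local singularities from Lemma \ref{estimate-of-kernel}), while the off-diagonal piece is shown to be Hilbert--Schmidt, via
$\iint_{\cup_{m\neq n}\Si_{m}\times\Si_{n}}|x-y|^{-(d-1)}\,dx\,dy\lesssim\sum_{n}n^{-(d-1)\ga+1}<\infty$;
there the squared far-field decay $(d-1)$ is partly paid back by the factor $n$ counting the indices $m<n$, so the same threshold $\ga>\tfrac{2}{d-1}$ emerges as in your $L^{1}$ row-sum computation $\sum_{n}n^{-\ga(d-1)/2}$. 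Your Schur-test version is more elementary and yields a single uniform row-sum bound (and, being symmetric, needs only one of the two Schur conditions, as you note); the paper's $L^{2}$ version yields as a by-product that the off-diagonal part of $F(z)$ is Hilbert--Schmidt, a compactness fact recorded in Remark \ref{rem-final}$\rm(ii)$ --- not needed for the theorem, but relevant to the comparison with the Jak\v{s}i\'{c}--Poulin approach, and also reused in the proof of Lemma \ref{continuity-free} to get norm continuity by dominated convergence (your estimate would serve the same purpose with a little extra care about the supremum in $x$). Both arguments rest on the same preparatory facts: Remark \ref{rem-tech-1}$\rm(ii)$ forcing $|x-y|>2R$ off the diagonal, Lemma \ref{lem-tech-1} together with \eqref{sparseness-2} giving $|x-y|\geq|x_{m}-x_{n}|-2R\gtrsim n^{\ga}$ uniformly in $m$, and Lemma \ref{estimate-of-kernel}. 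One small point to make explicit: for the finitely many indices $n$ below your threshold, where $Cn^{\ga}-2R$ is not yet a useful lower bound, you should fall back on $|x-y|>2R$ and bound those contributions by a constant; since their number is independent of $m$ and of $z\in\SS$, this is harmless and your conclusion stands.
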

\begin{proof}
Let $\phi,\psi\in L^{2}$, we estimate $\lan\phi,F(z)\psi\ran$. Using \eqref{integral-kernel}, we have
\begin{equation}\label{an-equality}
\begin{split}
\lan\phi,F(z)\psi\ran&=\iint_{\R^{d}\times\R^{d}}\ol{\phi(x)}|V|^{1/2}(x)k_{0,z}(x,y)V^{1/2}(y)\psi(y)dxdy\\
&=\sum_{n=N}^{\infty}\iint_{\R^{d}\times\R^{d}}\ol{\phi(x)}|v_{n}(x-x_{n})|^{1/2}k_{0,z}(x,y)v_{n}(y-x_{n})^{1/2}\psi(y)dxdy\\
&\quad+\sum_{m,n\geq N,m\neq n}\iint_{\R^{d}\times\R^{d}}\ol{\phi(x)}|v_{m}(x-x_{m})|^{1/2}k_{0,z}(x,y)v_{n}(y-x_{n})^{1/2}\psi(y)dxdy\\
&\quad=I+II,
\end{split}
\end{equation}
where $v_{n}(\cdot-x_{n})^{1/2}={\sgn}(v_{n}(\cdot-x_{n}))|v_{n}(\cdot-x_{n})|^{1/2}$. Note by Lemma \ref{lem-tech-1} and Remark \ref{rem-tech-1}, if $x,y\in\Si_{n}$ then $|x-y|\leq2R$, and if $x\in\Si_{m}$, $y\in\Si_{n}$ with $m\neq n$ then $|x-y|>2R$.

For the term $I$ in the last line of \eqref{an-equality}, we claim that
\begin{equation}\label{estimate-1}
|I|\lesssim\|\chi_{\Si}\phi\|\|\chi_{\Si}\psi\|.
\end{equation}
Considering Lemma \ref{estimate-of-kernel}, we distinguish between $d=2$ and $d\geq3$. We first deal with the case $d\geq3$. If $d\geq3$, we obtain from Lemma \ref{estimate-of-kernel} and the boundedness of $V$ that
\begin{equation*}
\begin{split}
|I|&\lesssim\sum_{n=N}^{\infty}\iint_{\Si_{n}\times\Si_{n}}\frac{|\phi(x)||\psi(y)|}{|x-y|^{d-2}}dxdy\\
&=\sum_{n=N}^{\infty}\iint_{\Si_{n}\times\Si_{n}}\frac{|\phi(x)|}{|x-y|^{(d-2)/2}}\frac{|\psi(y)|}{|x-y|^{(d-2)/2}}dxdy\\
&\leq\sum_{n=N}^{\infty}\bigg(\iint_{\Si_{n}\times\Si_{n}}\frac{|\phi(x)|^{2}}{|x-y|^{d-2}}dxdy\bigg)^{1/2}\bigg(\iint_{\Si_{n}\times\Si_{n}}\frac{|\psi(x)|^{2}}{|x-y|^{d-2}}dxdy\bigg)^{1/2}
\end{split}
\end{equation*}
Using the inequality $\chi_{\Si_{n}}(x)\chi_{\Si_{n}}(y)\leq\chi_{\Si_{n}-\Si_{n}}(x-y)$, we find
\begin{equation*}
\begin{split}
\iint_{\Si_{n}\times\Si_{n}}\frac{|\phi(x)|^{2}}{|x-y|}dxdy&=\iint_{\R^{d}\times\R^{d}}|\chi_{\Si_{n}}(x)\phi(x)|^{2}\frac{\chi_{\Si_{n}}(x)\chi_{\Si_{n}}(y)}{|x-y|^{d-2}}dxdy\\
&\leq\iint_{\R^{d}\times\R^{d}}|\chi_{\Si_{n}}(x)\phi(x)|^{2}\frac{\chi_{\Si_{n}-\Si_{n}}(x-y)}{|x-y|^{d-2}}dxdy\\
&=\|\chi_{\Si_{n}}\phi\|^{2}\int_{\Si_{n}-\Si_{n}}\frac{1}{|x|^{d-2}}dx.
\end{split}
\end{equation*}
Similarly, 
\begin{equation*}
\iint_{\Si_{n}\times\Si_{n}}\frac{|\psi(x)|^{2}}{|x-y|^{d-2}}dxdy\leq\|\chi_{\Si_{n}}\psi\|^{2}\int_{\Si_{n}-\Si_{n}}\frac{1}{|x|^{d-2}}dx.
\end{equation*}
It then follows that
\begin{equation*}
\begin{split}
|I|&\lesssim\sum_{n=N}^{\infty}\|\chi_{\Si_{n}}\phi\|\|\chi_{\Si_{n}}\psi\|\int_{\Si_{n}-\Si_{n}}\frac{1}{|x|^{d-2}}dx\\
&\leq\bigg(\int_{\bar{B}_{2R}}\frac{1}{|x|^{d-2}}dx\bigg)\sum_{n=N}^{\infty}\|\chi_{\Si_{n}}\phi\|\|\chi_{\Si_{n}}\psi\|\\
&\leq\bigg(\int_{\bar{B}_{2R}}\frac{1}{|x|^{d-2}}dx\bigg)\|\chi_{\Si}\phi\|\|\chi_{\Si}\psi\|,
\end{split}
\end{equation*}
where we used the fact that for any $n\geq N$, $\Si_{n}-\Si_{n}\subset\bar{B}_{2R}$, and so $\int_{\Si_{n}-\Si_{n}}\frac{1}{|x|^{d-2}}dx\leq\int_{\bar{B}_{2R}}\frac{1}{|x|^{d-2}}dx$, and H\"{o}lder's inequality for sequence in the last step. Since $\int_{\bar{B}_{2R}}\frac{1}{|x|^{d-2}}dx<\infty$, we obtain \eqref{estimate-1} in the case $d\geq3$.

The estimate \eqref{estimate-1} in the case $d=2$ can be treated similarly. In fact, Lemma \ref{estimate-of-kernel}, the boundedness of $V$ and arguments as in the case $d\geq3$ give
\begin{equation*}
\begin{split}
|I|&\lesssim\bigg(\int_{\bar{B}_{2R}}\bigg|\ln\frac{2}{|x|}\bigg|dx\bigg)\|\chi_{\Si}\phi\|\|\chi_{\Si}\psi\|\\
&=\bigg(2\pi\int_{0}^{2R}\bigg|\ln\frac{2}{r}\bigg|rdr\bigg)\|\chi_{\Si}\phi\|\|\chi_{\Si}\psi\|\\
&\lesssim\|\chi_{\Si}\phi\|\|\chi_{\Si}\psi\|,
\end{split}
\end{equation*}
since the integral $\int_{0}^{2R}\big|\ln\frac{2}{r}\big|rdr<\infty$. Thus, \eqref{estimate-1} also holds in the case $d=2$.

For the term $II$ in the last line of \eqref{an-equality}, we claim that
\begin{equation}\label{estimate-2}
|II|\lesssim\|\chi_{\Si}\phi\|\|\chi_{\Si}\psi\|.
\end{equation}
In fact, H\"{o}lder's inequality applied to integrals with respect to $x$ and $y$ successively gives
\begin{equation*}
\begin{split}
|II|&\lesssim\sum_{m,n\geq N,m\neq n}\bigg(\int_{\Si_{m}}|\phi(x)|^{2}dx\bigg)^{1/2}\bigg(\iint_{\Si_{m}\times\Si_{n}}\frac{1}{|x-y|^{d-1}}dxdy\bigg)^{1/2}\bigg(\int_{\Si_{n}}|\psi(y)|^{2}dy\bigg)^{1/2}\\
&\leq\bigg(\sum_{m,n\geq N,m\neq n}\iint_{\Si_{m}\times\Si_{n}}\frac{1}{|x-y|^{d-1}}dxdy\bigg)^{1/2}\bigg(\sum_{m,n\geq N,m\neq n}\int_{\Si_{m}}|\phi(x)|^{2}dx\int_{\Si_{n}}|\psi(y)|^{2}dy\bigg)^{1/2}\\
&\leq\bigg(\iint_{\cup_{m,n\geq N,m\neq n}\Si_{m}\times\Si_{n}}\frac{1}{|x-y|^{d-1}}dxdy\bigg)^{1/2}\|\chi_{\Si}\phi\|\|\chi_{\Si}\psi\|.
\end{split}
\end{equation*}
For the integral in the last line of the above estimate, symmetry gives
\begin{equation*}
\begin{split}
\iint_{\cup_{m,n\geq N,m\neq n}\Si_{m}\times\Si_{n}}\frac{1}{|x-y|^{d-1}}dxdy&\lesssim\iint_{\cup_{m,n\geq N,m<n}\Si_{m}\times\Si_{n}}\frac{1}{|x-y|^{d-1}}dxdy\\
&=\sum_{n\geq N}\iint_{\cup_{m:N\leq m<n}\Si_{m}\times\Si_{n}}\frac{1}{|x-y|^{d-1}}dxdy.
\end{split}
\end{equation*}
Since $|x_{m}-x_{n}|\gtrsim n^{\ga}$ for $m<n$ by the sparseness condition \eqref{sparseness-2}, we deduce from Remark \ref{rem-tech-1} that $|x-y|\gtrsim n^{\ga}$ for $(x,y)\in\Si_{m}\times\Si_{n}$ if $m<n$. Thus, for any $n\geq N$
\begin{equation*}
\iint_{\cup_{m:N\leq m<n}\Si_{m}\times\Si_{n}}\frac{1}{|x-y|^{d-1}}dxdy\lesssim n^{-(d-1)\ga}\bigg|\bigcup_{m:N\leq m<n}\Si_{m}\times\Si_{n}\bigg|\lesssim n^{-(d-1)\ga+1},
\end{equation*}
which implies that
\begin{equation*}
\iint_{\cup_{m,n\geq N,m\neq n}\Si_{m}\times\Si_{n}}\frac{1}{|x-y|^{2}}dxdy\lesssim\sum_{n\geq N}n^{-(d-1)\ga+1}.
\end{equation*}
Since $\ga>\frac{2}{d-1}$, i.e., $(d-1)\ga-1>1$, the above series converges. Hence, \eqref{estimate-2} follows.

Combining \eqref{an-equality}, \eqref{estimate-1} and \eqref{estimate-2}, we find for any $\phi,\psi\in L^{2}(\R^{3})$ the estimate
\begin{equation*}
|\lan\phi,F(z)\psi\ran|\lesssim\|\chi_{\Si}\phi\|\|\chi_{\Si}\psi\|,
\end{equation*}
which leads to the result since the above estimate is clearly independent of $z\in\SS$.
\end{proof}

Theorem \ref{lem-free-res} suggests us to define for $\la\in[a,b]$ the boundary operator 
\begin{equation}\label{def-boundary-op}
F(\la+i0)=\lim_{\ep\da0}F(\la+i\ep)\quad\text{in}\,\,\LLL(L^{2}).
\end{equation}
Note for $\la\in[a,b]$, the integral kernel $k_{F(\la+i\ep)}$ given in \eqref{integral-kernel} converges pointwise to 
\begin{equation*}
k_{F(\la+i0)}(x,y):=|V|^{1/2}(x)k_{0,\la}(x,y)V^{1/2}(y)
\end{equation*}
as $\ep\da0$. Moreover, since $k_{F(\la+i0)}$ clearly defines a bounded linear operator on $L^{2}$ by the proof of Theorem \ref{lem-free-res}, $F(\la+i0)$ must be the integral operator with integral kernel $k_{F(\la+i0)}$, that is,
\begin{equation}\label{boundary-op}
F(\la+i0)\phi(x)=\int_{\R^{3}}|V|^{1/2}(x)k_{0,\la}(x,y)V^{1/2}(y)\phi(y)dy,\quad\phi\in L^{2}.
\end{equation}
Then, considering the continuity of the integral kernels $k_{F(z)}$ in $z\in\ol{\SS}$, we obtain the continuity of $F(z)$ on $\ol{\SS}$. The above scenarios are summarized in

\begin{lem}\label{continuity-free}
The following statements hold.
\begin{itemize}
\item[\rm(i)] For each $\la\in[a,b]$, $F(\la+i\ep)$ is norm Cauchy as $\ep\da0$. In particular, $F(\la+i0)$, given in \eqref{def-boundary-op}, is well-defined.

\item[\rm(ii)] For each $\la\in[a,b]$, $F(\la+i0)$ is an integral operator with integral kernel $k_{F(\la+i0)}$, that is,  \eqref{boundary-op} is true.

\item[\rm(iii)] $F(z)$ is analytic in ${\rm int}(\SS)$ and continuous on $\ol{\SS}$. In particular, 
\begin{equation*}
\sup_{z\in\ol{\SS}}\|F(z)\|<\infty.
\end{equation*}

\end{itemize}
\end{lem}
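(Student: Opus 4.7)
The plan is to mimic the proof of Theorem \ref{lem-free-res} applied to the kernel difference. I first define the candidate boundary operator $T_{\la}$ as the integral operator with kernel $|V|^{1/2}(x) k_{0,\la}(x,y) V^{1/2}(y)$: repeating the estimates of Theorem \ref{lem-free-res} with $k_{0,\la}$ in place of $k_{0,\la+i\ep}$ (legal because the bounds in Lemma \ref{estimate-of-kernel} are uniform on $\ol{\SS}$) shows that $T_{\la}$ is bounded on $L^{2}$.

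For (i), I would show $F(\la + i\ep) \to T_{\la}$ in operator norm as $\ep \da 0$. Writing $\lan\phi, (F(\la+i\ep) - T_{\la})\psi\ran$ via the splitting \eqref{an-equality} into a diagonal piece $I_{\ep}$ and an off-diagonal piece $II_{\ep}$, the goal is to show $|I_{\ep}| + |II_{\ep}| = o(1)\|\phi\|\|\psi\|$ uniformly over unit $\phi,\psi \in L^{2}$. For the diagonal piece, the translation invariance of $k_{0,z}$ in $x - y$ and the uniform support constraint $\Si_{n} - x_{n} \subset \bar{B}_{R}$ reduce the per-cell integrals to the universal quantity
\begin{equation*}
c(\ep) := \int_{\bar{B}_{2R}} |k_{0,\la+i\ep}(z) - k_{0,\la}(z)|\,dz,
\end{equation*}
which tends to $0$ as $\ep \da 0$ by dominated convergence, with integrable majorant supplied by Lemma \ref{estimate-of-kernel}. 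A Cauchy--Schwarz step in the spirit of the proof of Theorem \ref{lem-free-res} then yields $|I_{\ep}| \lesssim c(\ep) \|\chi_{\Si}\phi\|\|\chi_{\Si}\psi\|$. For the off-diagonal piece, I would truncate at an index $M$: the tail $n > M$ is controlled uniformly in $\ep$ by the convergent series $\sum_{n > M} n^{-(d-1)\ga + 1}$ from the proof of Theorem \ref{lem-free-res}, which can be made arbitrarily small, while on the finite range $N \leq m, n \leq M$ with $m \neq n$, the kernel difference $k_{0,\la+i\ep}(x-y) - k_{0,\la}(x-y)$ tends to $0$ uniformly on the compact sets $\Si_{m} \times \Si_{n}$ (which stay bounded away from the diagonal by Remark \ref{rem-tech-1}), so the finite bilinear form vanishes as $\ep \da 0$ uniformly in unit $\phi,\psi$. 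Combining, $\|F(\la+i\ep) - T_{\la}\| \to 0$, which proves (i) with $F(\la + i0) := T_{\la}$; part (ii) is then immediate from the definition of $T_{\la}$.

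For (iii), analyticity on ${\rm int}(\SS)$ is standard: $R_{0}(z)$ is $\LLL(L^{2})$-analytic on $\C \bs \R$, and sandwiching by the bounded multipliers $|V|^{1/2}, V^{1/2}$ preserves analyticity. Continuity on $\ol{\SS}$ follows by applying the same difference argument to general sequences $z_{k} \to z_{0}$ in $\ol{\SS}$ rather than to the vertical segment $\ep \da 0$; the estimates go through unchanged because Lemma \ref{estimate-of-kernel} and the sparsity series $\sum n^{-(d-1)\ga+1}$ are both uniform on $\ol{\SS}$. Uniform boundedness on $\ol{\SS}$ then follows from Theorem \ref{lem-free-res} together with boundary continuity.

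The main obstacle is securing the uniform diagonal bound: the translation-invariance reduction crucially uses the fact that every $v_{n}$ is supported in a fixed bounded set, for otherwise $c(\ep)$ would be replaced by an $n$-dependent constant with no reason to remain summable. The off-diagonal tail is straightforward once the sparsity exponent $\ga > \frac{2}{d-1}$ is in hand.
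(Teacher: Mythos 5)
Your proposal is correct and follows essentially the same route as the paper: both apply the diagonal/off-diagonal estimates from the proof of Theorem \ref{lem-free-res} to the kernel difference $k_{0,z_1}-k_{0,z_2}$, control the diagonal part by the $L^{1}(\bar{B}_{2R})$ norm of that difference, and use the sparsity series $\sum n^{-(d-1)\ga+1}$ for the off-diagonal part, obtaining convergence and continuity on $\ol{\SS}$ together with analyticity inherited from $R_{0}(z)$. The only cosmetic differences are that you identify the limit operator $T_{\la}$ directly instead of first proving norm-Cauchyness, and you handle the off-diagonal limit by truncation plus uniform convergence on compacts rather than by dominated convergence over the whole off-diagonal region; both are sound.
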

\begin{proof}
Let $z_{1},z_{2}\in\SS$, the proof of Theorem \ref{lem-free-res} gives
\begin{equation*}
\begin{split}
\|F(z_{1})-F(z_{2})\|&\lesssim\int_{\bar{B}_{2R}}|k_{0,z_{1}}(|x|)-k_{0,z_{2}}(|x|)|dx\\
&\quad+\bigg(\iint_{\cup_{m,n\geq N,m\neq n}\Si_{m}\times\Si_{n}}|k_{0,z_{1}}(x,y)-k_{0,z_{2}}(x,y)|^{2}dxdy\bigg)^{1/2}.
\end{split}
\end{equation*}
Since, by the proof of Theorem 
\ref{lem-free-res}, the integrals $\int_{\bar{B}_{2R}}\frac{1}{|x-y|^{d-2}}dxdy$ (if $d\geq3$), $\int_{\bar{B}_{2R}}\big|\ln\frac{2}{|x|}\big|dx$ (if $d=2$) and $\iint_{\cup_{m,n\geq N,m\neq n}\Si_{m}\times\Si_{n}}\frac{1}{|x-y|^{d-1}}dxdy$ are finite , $\rm(i)$ and $\rm(ii)$ follow from Lebesgue's dominated convergence theorem. This also implies the continuity of $F(z)$ on $\ol{\SS}$. The analyticity of $F(z)$ in ${\rm int}(\SS)$, the interior of $\SS$, is a simple result of the analycity of $R_{0}(z)$ in ${\rm int}(\SS)$. Thus, $\rm(iii)$ is true.
\end{proof}

\section{Analysis of $P(z)$}\label{sec-perturbed}

Recall that for $z\in\SS$,
\begin{equation*}
P(z)=|V|^{1/2}R(z)V^{1/2}
\end{equation*}
and it satisfies
\begin{equation}\label{res-id}
P(z)(1+F(z))=F(z),
\end{equation}
where $F(z)=|V|^{1/2}R_{0}(z)V^{1/2}$. The aim of this section is to establish the uniform boundedness of $P(z)$ on $\SS$. The main result is following
\begin{thm}\label{thm-perturb-res}
$P(z)$ is uniformly bounded on $\SS$, that is, 
\begin{equation*}
\sup_{z\in\SS}\|P(z)\|<\infty.
\end{equation*}
\end{thm}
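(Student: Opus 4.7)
The plan is to exploit the algebraic identity $P(z)(1+F(z))=F(z)$, which follows from the first resolvent identity $R(z)-R_{0}(z)=-R(z)VR_{0}(z)$ together with the factorization $V=V^{1/2}|V|^{1/2}$. Once $1+F(z)$ is known to be invertible, this gives $P(z)=F(z)(1+F(z))^{-1}$, and then the theorem reduces, via $\|P(z)\|\leq\|F(z)\|\,\|(1+F(z))^{-1}\|$ and Theorem \ref{lem-free-res}, to proving uniform boundedness of $(1+F(z))^{-1}$ on $\SS$.

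For $z\in\SS$ one has $\ep>0$, so $R(z)$ and hence $P(z)=|V|^{1/2}R(z)V^{1/2}$ is bounded a priori. Using both forms of the resolvent identity one computes directly that $(1+F(z))(1-P(z))=(1-P(z))(1+F(z))=1$, so $1+F(z)$ is invertible on $\SS$ with $(1+F(z))^{-1}=1-P(z)$. This takes care of invertibility in the open part of the strip, but does not yet give a uniform norm bound as $\ep\da 0$.

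The substantial step is to extend invertibility to the boundary: $1+F(\la+i0)$ is invertible for every $\la\in[a,b]$. I would argue by contradiction. If $\phi\in L^{2}$ satisfies $(1+F(\la+i0))\phi=0$, set $\psi:=-R_{0}(\la+i0)V^{1/2}\phi$. Then $|V|^{1/2}\psi=F(\la+i0)\phi=-\phi$, and using $V=V^{1/2}|V|^{1/2}$ one gets $(H-\la)\psi=0$ in the distributional sense, so $\psi$ is a generalized eigenfunction of $H$ at the positive energy $\la$. A Rellich/unique-continuation argument, using that $\psi$ is given by the outgoing resolvent and therefore carries outgoing radiation behavior, should force $\psi\equiv 0$ at infinity and hence globally, which then forces $\phi=0$. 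Once this boundary invertibility is in place, the norm continuity of $F(z)$ on the compact set $\ol{\SS}$ (Lemma \ref{continuity-free}$\rm(iii)$) combined with continuity of operator inversion on the open set of invertible elements gives norm continuity of $(1+F(z))^{-1}$ on $\ol{\SS}$, and hence uniform boundedness, which completes the proof.

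The main obstacle is clearly the boundary invertibility. The classical Agmon argument produces a compactly-supported-in-angle outgoing solution and then invokes Rellich's theorem on the sphere at infinity; this works cleanly for short-range $V$ because $V^{1/2}\phi$ sits in a weighted $L^{2}$-space with a genuinely growing weight. Here $V$ need not decay at infinity, so $V^{1/2}\phi$ is only localized on the sparse set $\Si=\bigcup_{n\geq N}\Si_{n}$ rather than in any classical weighted space. I expect the argument to exploit the sparse geometry (the separation $n^{\ga}$ together with the kernel bound of Lemma \ref{estimate-of-kernel}) in order to treat each localized piece $v_{n}(\cdot-x_{n})\phi$ as an isolated source, sum the resulting outgoing contributions in a summable way as in the proof of Theorem \ref{lem-free-res}, and then apply a Rellich-type theorem in annular shells far from every $x_{n}$ to conclude $\psi\equiv 0$.
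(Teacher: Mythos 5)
Your reduction is sound as far as it goes: the identity $P(z)=F(z)(1+F(z))^{-1}$, the observation that for $\ep>0$ one has the explicit inverse $(1+F(z))^{-1}=1-P(z)$ (this is in fact a cleaner way to get the open-strip invertibility than the contradiction argument of Lemma \ref{invertible-away-from-real}), and the final step passing from invertibility on the compact set $\ol{\SS}$ plus norm continuity of $F$ to a uniform bound (this is exactly Lemma \ref{uniform-bounded-inverse}) are all fine. The problem is that the entire weight of the theorem rests on the boundary step, and there your proposal has a genuine gap, in fact two. First, the Rellich/Agmon argument you sketch is exactly what is unavailable in this setting: since the amplitudes $\|v_{n}\|_{\infty}$ need not decay, $V^{1/2}\phi$ does not lie in any weighted space with a growing weight, and the classical bootstrap that upgrades the radiation behavior of $\psi=-R_{0}(\la+i0)V^{1/2}\phi$ to $\psi\equiv0$ breaks down; your final paragraph acknowledges this but replaces the proof by an expectation that sparseness saves the argument. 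Making that work would be a substantial new result, not a routine adaptation, and the paper supplies no such eigenfunction or unique-continuation input anywhere.

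Second, and independently of the Rellich issue, showing $\ker(1+F(\la+i0))=\{0\}$ would not finish the proof. $F(\la+i0)$ is not compact: only its off-diagonal part is Hilbert--Schmidt, while the diagonal part is an infinite sum of bump contributions $|v_{n}(\cdot-x_{n})|^{1/2}R_{0}(\la+i0)v_{n}(\cdot-x_{n})^{1/2}$ with no decay of norms (cf.\ Remark \ref{rem-final}(ii)). Hence there is no Fredholm alternative, and injectivity (even together with dense range) does not yield a bounded inverse; one needs a quantitative lower bound. This is precisely how the paper proceeds instead: it never studies boundary eigenfunctions, but shows that the inverses $(1+F(\la+i\ep))^{-1}$, which exist for $\ep\in(0,1]$, stay uniformly bounded as $\ep\da0$ (using the norm convergence $F(\la+i\ep)\ra F(\la+i0)$ from Lemma \ref{continuity-free} and the fact that $F(\la+i0)$ annihilates every $\phi$ vanishing on $\Si$, so $1+F(\la+i\ep)$ cannot degenerate), and then transfers bounded invertibility to $\la+i0$ via the approximate-inverse Proposition \ref{approximate-inverse} (Lemma \ref{invertible-on-the-real}), before concluding by continuity and compactness of $\ol{\SS}$. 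To close your proof you would either have to adopt an argument of this quantitative type, or genuinely prove both a Rellich-type vanishing theorem for sparse non-decaying bumps and a substitute for the missing Fredholm property; as written, neither is provided.
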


Considering the equation \eqref{res-id} and Theorem \ref{lem-free-res}, to prove Theorem \ref{res-id}, it suffices to prove the invertibility of $1+F(z)$ for each $z\in\SS$ and the uniform boundedness of their inverses on $\SS$, which are accomplished through several lemmas.

We first establishes the invertibility of $1+F(z)$ for $z\in\SS$.

\begin{lem}\label{invertible-away-from-real}
For each $z\in\SS$, the operator $1+F(z)$ is boundedly invertible. Moreover, $(1+F(z))^{-1}$ is continuous on $\SS$.
\end{lem}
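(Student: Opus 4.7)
The plan is to exhibit an explicit two-sided inverse for $1+F(z)$ in terms of $P(z)$, which is available on $\SS$ precisely because $\Im z>0$ there. Concretely, for any $z\in\SS$, self-adjointness of $H$ (via Kato--Rellich, cf.\ Section \ref{sec-settings}) gives $\|R(z)\|_{\LLL(L^{2})}\leq 1/\Im z<\infty$, and boundedness of $V$ then makes $P(z)=|V|^{1/2}R(z)V^{1/2}$ a bounded operator on $L^{2}$. Thus $1-P(z)$ is a well-defined element of $\LLL(L^{2})$, and the goal is to verify that it inverts $1+F(z)$ on both sides.

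To derive the needed algebraic identities, I would sandwich the first resolvent identity in both of its forms, $R(z)=R_{0}(z)-R(z)VR_{0}(z)$ and $R(z)=R_{0}(z)-R_{0}(z)VR(z)$, between $|V|^{1/2}$ on the left and $V^{1/2}$ on the right. Using the pointwise factorization $V=V^{1/2}|V|^{1/2}=|V|^{1/2}V^{1/2}$ built into the definition of $V^{1/2}$, this produces the pair of identities
\begin{equation*}
P(z)(1+F(z))=F(z)\quad\text{and}\quad(1+F(z))P(z)=F(z),
\end{equation*}
the first being already recorded as \eqref{res-id}. Rewriting them as $P(z)F(z)=F(z)-P(z)$ and $F(z)P(z)=F(z)-P(z)$, respectively, a direct expansion yields
\begin{equation*}
(1+F(z))(1-P(z))=1+F(z)-P(z)-F(z)P(z)=1,
\end{equation*}
and symmetrically $(1-P(z))(1+F(z))=1$. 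Hence $1+F(z)$ is boundedly invertible on $L^{2}$ with $(1+F(z))^{-1}=1-P(z)$ for every $z\in\SS$.

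For the continuity assertion, Lemma \ref{continuity-free}$\rm(iii)$ furnishes norm-continuity of $z\mapsto F(z)$ on $\ol{\SS}$, hence on $\SS$. Since operator inversion is norm-continuous on the open set of invertibles in $\LLL(L^{2})$, and we have just shown $1+F(z)$ lies in this set throughout $\SS$, the map $z\mapsto(1+F(z))^{-1}=1-P(z)$ is automatically continuous on $\SS$.

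I expect the argument to be essentially routine once the ansatz $1-P(z)$ is identified; the real work is done by self-adjointness of $H$, which is only available off the real axis. The one mild bookkeeping point is the manipulation of $|V|^{1/2}$ and $V^{1/2}$ in the sandwiched resolvent identity, which relies on $V=V^{1/2}|V|^{1/2}$ holding as an identity of multiplication operators. Note that this strategy is strictly confined to the open half-strip $\SS$: extending invertibility to the boundary $\ep=0$ would require a separate argument (presumably handled later in the section), because $R(\la+i0)$ is not a priori a bounded operator on $L^{2}$.
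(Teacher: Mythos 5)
Your proof is correct, but it takes a genuinely different route from the paper. The paper never writes down an explicit inverse: it first shows $1+F(z)$ is injective with dense range via an imaginary-part computation ($\Im(V^{1/2}R_{0}(z)V^{1/2})=\Im z\,V^{1/2}R_{0}(\ol z)R_{0}(z)V^{1/2}$ forces $V^{1/2}\phi=0$ for any $\phi\in\ker(1+F(z))$), and then rules out $-1$ lying in the continuous spectrum by a contradiction argument with an approximate null sequence $\phi_{n}$, setting $\psi_{n}=R_{0}(z)V^{1/2}\phi_{n}$ and using the bounded invertibility of $H-z$; continuity is then obtained by an explicit Neumann-series perturbation. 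You instead exploit the same off-axis fact ($z\in\rho(H)$, so $P(z)=|V|^{1/2}R(z)V^{1/2}$ is bounded) to exhibit the two-sided inverse $(1+F(z))^{-1}=1-P(z)$, deriving $P(z)(1+F(z))=F(z)$ and $(1+F(z))P(z)=F(z)$ from the two forms of the second resolvent identity — both legitimate here since $V$ is bounded and all factors are bounded operators — and then get continuity from norm-continuity of $F$ (Lemma \ref{continuity-free}) plus continuity of inversion on the invertibles, which is the same Neumann-series fact the paper spells out. Your argument is shorter and yields the explicit formula $(1+F(z))^{-1}=1-P(z)$ (equivalently $P(z)=F(z)(1+F(z))^{-1}$, which the paper only uses implicitly later); the paper's spectral-exclusion argument is more roundabout but is the kind of reasoning that does not presuppose a formula for the inverse. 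You are also right to flag that your method is confined to $\Im z>0$ and that the boundary case $\la+i0$ requires the separate argument of Lemma \ref{invertible-on-the-real}, exactly as the paper does.
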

\begin{proof}
Fix any $z\in\SS$. We first show that $-1$ is neither an eigenvalue nor in the residue spectrum of $F(z)$. Clearly, it suffices to show that $1+F(z)$ is one-to-one and has dense range.  To do so, let $\phi\in L^{2}$ be such that $(1+F(z))\phi=0$, then ${\sgn}(V)(1+F(z))\phi=0$, that is,
\begin{equation*}
{\sgn}(V)\phi+V^{1/2}R_{0}(z)V^{1/2}\phi=0.
\end{equation*}
Writing $V^{1/2}R_{0}(z)V^{1/2}=\Re(V^{1/2}R_{0}(z)V^{1/2})+i\Im(V^{1/2}R_{0}(z)V^{1/2})$, we have
\begin{equation*}
\lan\phi,{\sgn}(V)\phi\ran+\lan\phi,\Re(V^{1/2}R_{0}(z)V^{1/2})\phi\ran+i\lan\phi,\Im(V^{1/2}R_{0}(z)V^{1/2})\phi\ran=0,
\end{equation*}
which implies that $\lan\phi,\Im(V^{1/2}R_{0}(z)V^{1/2})\phi\ran=0$. Since
\begin{equation*}
\Im(V^{1/2}R_{0}(z)V^{1/2})=V^{1/2}\Im R_{0}(z)V^{1/2}=\Im zV^{1/2}R_{0}(\ol{z})R_{0}(z)V^{1/2},
\end{equation*}
we conclude that $\Im z\|R_{0}(z)V^{1/2}\phi\|^{2}=0$, which implies $R_{0}(z)V^{1/2}\phi=0$, and so, $V^{1/2}\phi=0$. Therefore, $F(z)\phi=0$ and
\begin{equation*}
\phi=(1+F(z))\phi=0.
\end{equation*}
This shows that $1+F(z)$ is one-to-one. A similar argument shows that $(1+F(z))^{*}=1+F(z)^{*}$ is also one-to-one. From the fact that $\ol{{\range}(1+F(z))}\oplus\ker(1+F(z)^{*})=L^{2}$, we conclude that $1+F(z)$ has dense range. Hence, $1+F(z)$ is one-to-one and has dense range. 

As a result of the fact that $-1$ is neither an eigenvalue nor in the residue spectrum of $F(z)$, we obtain that if $1+F(z)$ is not boundedly invertible, then $(1+F(z))^{-1}$ is densely defined and unbounded. 

Now, we show that $1+F(z)$ is boundedly invertible. For contradiction, we assume that $1+F(z)$ is not boundedly invertible, that is, $-1\in\si(F(z))$. Then, the above analysis says that there exsits $\{\phi_{n}\}_{n\in\N}\subset L^{2}$ such that $\|\phi_{n}\|=1$ for all $n$ and $\|(1+F(z))\phi_{n}\|\ra0$ as $n\ra\infty$. Define 
\begin{equation*}
\psi_{n}=R_{0}(z)V^{1/2}\phi_{n}.
\end{equation*}
Since $V$ is bounded, $\psi_{n}\in H^{2}$. Moreover, there's some $C>0$ such that $\inf_{n\in\N}\|\psi_{n}\|\geq C$. In fact, if this is not true, then we can find some subsequence $\{\phi_{n_{k}}\}_{k\in\N}$ such that $\|\psi_{n_{k}}\|=\|R_{0}(z)V^{1/2}\phi_{n_{k}}\|\ra0$ as $k\ra\infty$. It then follows that
\begin{equation*}
\begin{split}
1=\|\phi_{n_{k}}\|&\leq\|(1+F(z))\phi_{n_{k}}\|+\|F(z)\phi_{n_{k}}\|\\
&\leq\|(1+F(z))\phi_{n_{k}}\|+\||V|^{1/2}\|_{\infty}\|R_{0}(z)V^{1/2}\phi_{n_{k}}\|\\
&\ra0\quad\text{as}\,\,k\ra\infty,
\end{split}
\end{equation*}
which leads to the contradiction. 

Also, setting $\vp_{n}=(1+F(z))\phi_{n}$, i.e., $\phi_{n}=\vp_{n}-F(z)\phi_{n}$, we have
\begin{equation*}
(H_{0}-z)\psi_{n}=V^{1/2}\phi_{n}=V^{1/2}\vp_{n}-V^{1/2}F(z)\phi_{n}=V^{1/2}\vp_{n}-V\psi_{n},
\end{equation*}
that is, $(H_{0}+V-z)\psi_{n}=V^{1/2}\vp_{n}$, or
\begin{equation*}
(H_{0}+V-z)\frac{\psi_{n}}{\|\psi_{n}\|}=\frac{V^{1/2}\vp_{n}}{\|\psi_{n}\|}.
\end{equation*}
Since $\inf_{n\in\N}\|\psi_{n}\|\geq C$, we have $\|\frac{V^{1/2}\vp_{n}}{\|\psi_{n}\|}\|\leq\frac{\|V^{1/2}\|_{\infty}\|\vp_{n}\|}{C}\ra0$ as $n\ra\infty$. The bounded invertibility of $H_{0}+V-z$ then ensures that
\begin{equation*}
\frac{\psi_{n}}{\|\psi_{n}\|}=(H_{0}+V-z)^{-1}\frac{V^{1/2}\vp_{n}}{\|\psi_{n}\|}\ra0\quad\text{as}\,\,n\ra\infty,
\end{equation*}
which leads to a contradiction. Consequently, $-1\in\rho(F(z))$, that is, $1+F(z)$ is boundedly invertible.

For the continuity of $(1+F(z))^{-1}$ on $\SS$, we fix any $z_{0}\in\SS$, and then for any $z\in\SS$ we have the formal expansion 
\begin{equation*}
\begin{split}
(1+F(z))^{-1}&=[1+F(z_{0})+F(z)-F(z_{0})]^{-1}\\
&=[(1+F(z_{0}))(1+(1+F(z_{0}))^{-1}(F(z)-F(z_{0})))]^{-1}\\
&=[1-(1+F(z_{0}))^{-1}(F(z_{0})-F(z))]^{-1}(1+F(z_{0}))^{-1}\\
&=\bigg(1+\sum_{n=1}^{\infty}[(1+F(z_{0}))^{-1}(F(z_{0})-F(z))]^{n}\bigg)(1+F(z_{0}))^{-1}.
\end{split}
\end{equation*}
The above series converges if $\|(1+F(z_{0}))^{-1}(F(z_{0})-F(z))\|<1$, which, by Lemma \ref{continuity-free}, is true for all $z$ close to $z_{0}$. Thus, for any $z\in\SS$ close to $z_{0}$, we deduce
\begin{equation*}
\begin{split}
&\|(1+F(z))^{-1}-(1+F(z_{0}))^{-1}\|\\
&\quad\quad\leq\bigg\|\bigg(\sum_{n=1}^{\infty}[(1+F(z_{0}))^{-1}(F(z_{0})-F(z))]^{n}\bigg)(1+F(z_{0}))^{-1}\bigg\|\\
&\quad\quad\leq\frac{\|(1+F(z_{0}))^{-1}(F(z_{0})-F(z))\|}{1-\|(1+F(z_{0}))^{-1}(F(z_{0})-F(z))\|}\|(1+F(z_{0}))^{-1}\|\\
&\quad\quad\ra0\quad\text{as}\quad z\ra z_{0}.
\end{split}
\end{equation*}
This establishes the continuity of $(1+F(z))^{-1}$ at $z_{0}$, and thus, $(1+F(z))^{-1}$ is continuous on $\SS$.
\end{proof}

Next, we prove the invertibility of $1+F(\la+i0)$ for $\la\in[a,b]$. To do so, we need the following result about approximate inverse (see e.g. \cite{EZ}).

\begin{prop}\label{approximate-inverse}
Let $X,Y$ be Banach spaces and $A\in\LLL(X,Y)$. Suppose that there exist $B_{1},B_{2}\in\LLL(Y,X)$, $R_{1}\in\LLL(Y)$ with $\|R_{1}\|_{\LLL(Y)}<1$ and $R_{2}\in\LLL(X)$ with $\|R_{2}\|_{\LLL(X)}<1$ such that
\begin{equation*}
\begin{split}
AB_{1}&=I_{Y}+R_{1}\,\,\text{on}\,\,Y\\
B_{2}A&=I_{X}+R_{2}\,\,\text{on}\,\,X,
\end{split}
\end{equation*}
where $I_{X}$ and $I_{Y}$ are identity operators on $X$ and $Y$, respectively. Then, $A$ is boundedly invertible with $A^{-1}:Y\ra X$ given by
\begin{equation*}
A^{-1}=B_{1}(I_{Y}+R_{1})^{-1}=(I_{X}+R_{2})^{-1}B_{2}.
\end{equation*}
\end{prop}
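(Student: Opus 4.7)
The plan is to invoke the Neumann series to invert the small perturbations $I_{Y}+R_{1}$ and $I_{X}+R_{2}$, then build a right inverse and a left inverse of $A$ out of them, and finally use the standard associativity trick to show the two candidate inverses must coincide.

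First, since $\|R_{1}\|_{\LLL(Y)}<1$, the Neumann series $\sum_{n=0}^{\infty}(-R_{1})^{n}$ converges absolutely in $\LLL(Y)$ and provides a bounded inverse $(I_{Y}+R_{1})^{-1}$ with $\|(I_{Y}+R_{1})^{-1}\|_{\LLL(Y)}\leq(1-\|R_{1}\|_{\LLL(Y)})^{-1}$. The same argument, applied on $X$ using $\|R_{2}\|_{\LLL(X)}<1$, produces $(I_{X}+R_{2})^{-1}\in\LLL(X)$.

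Next, I would define the candidate two-sided inverses
\begin{equation*}
\wt{B}_{1}:=B_{1}(I_{Y}+R_{1})^{-1}\in\LLL(Y,X),\qquad \wt{B}_{2}:=(I_{X}+R_{2})^{-1}B_{2}\in\LLL(Y,X).
\end{equation*}
Right-multiplying the hypothesis $AB_{1}=I_{Y}+R_{1}$ by $(I_{Y}+R_{1})^{-1}$ yields $A\wt{B}_{1}=I_{Y}$, so $\wt{B}_{1}$ is a bounded right inverse of $A$. Symmetrically, left-multiplying $B_{2}A=I_{X}+R_{2}$ by $(I_{X}+R_{2})^{-1}$ yields $\wt{B}_{2}A=I_{X}$, so $\wt{B}_{2}$ is a bounded left inverse of $A$.

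Finally, associativity forces the two to agree:
\begin{equation*}
\wt{B}_{2}=\wt{B}_{2}(A\wt{B}_{1})=(\wt{B}_{2}A)\wt{B}_{1}=\wt{B}_{1}.
\end{equation*}
Hence $A$ is boundedly invertible and $A^{-1}=\wt{B}_{1}=\wt{B}_{2}$, which matches the formula in the statement exactly. The proposition is a routine functional-analytic lemma and there is no genuine obstacle; the only subtlety worth flagging is that $\wt{B}_{1}$ and $\wt{B}_{2}$ are manufactured from different data (one side from $B_{1}$, the other from $B_{2}$) and are not manifestly the same operator — the final associativity line, which is the classical uniqueness of inverses via $LA=I,\,AR=I\Rightarrow L=R$, is what collapses them together.
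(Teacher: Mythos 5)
Your proof is correct: the Neumann series inverts $I_{Y}+R_{1}$ and $I_{X}+R_{2}$, right- and left-multiplication give $A\wt{B}_{1}=I_{Y}$ and $\wt{B}_{2}A=I_{X}$, and the associativity identity $\wt{B}_{2}=(\wt{B}_{2}A)\wt{B}_{1}=\wt{B}_{1}$ yields the two-sided bounded inverse with exactly the stated formula. The paper itself offers no proof of this proposition (it is quoted from the cited lecture notes of Evans and Zworski), and your argument is precisely the standard one used there, so there is nothing to add.
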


Now, we prove

\begin{lem}\label{invertible-on-the-real}
For each $\la\in[a,b]$, the operator $1+F(\la+i0)$ is boundedly invertible.
\end{lem}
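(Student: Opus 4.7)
The plan is to apply Proposition \ref{approximate-inverse} to $A = 1 + F(\la + i0)$ with trial inverses $B_1 = B_2 := (1 + F(\la + i\ep))^{-1}$ for $\ep > 0$ sufficiently small; these exist by Lemma \ref{invertible-away-from-real}. A direct algebraic manipulation yields
\[
A B_1 = I + \bigl(F(\la + i0) - F(\la + i\ep)\bigr)(1 + F(\la + i\ep))^{-1} =: I + R_1,
\]
and similarly $B_2 A = I + R_2$. By Lemma \ref{continuity-free}, $\|F(\la + i0) - F(\la + i\ep)\| \to 0$ as $\ep \downarrow 0$, so the hypothesis $\|R_i\| < 1$ required by Proposition \ref{approximate-inverse} will hold for some small $\ep$ provided $\|(1+F(\la+i\ep))^{-1}\|$ stays bounded as $\ep \downarrow 0$.

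The crux is therefore to establish $\limsup_{\ep \downarrow 0} \|(1+F(\la+i\ep))^{-1}\| < \infty$, which I would prove by contradiction. If this $\limsup$ were infinite, along some $\ep_n \downarrow 0$ there would be unit vectors $\phi_n$ with $\|(1+F(\la+i\ep_n))\phi_n\| \to 0$, and hence by norm continuity of $F$ on $\ol{\SS}$ also $\|(1+F(\la+i0))\phi_n\| \to 0$. To extract a nontrivial limit point, I would split $F(\la+i0)$ into its $m = n$ and $m \neq n$ contributions from the proof of Theorem \ref{lem-free-res}: the off-diagonal part $F_{\mathrm{off}}(\la+i0)$ is compact as an operator-norm limit of finite Hilbert--Schmidt truncations, using the sparseness tail $\sum_n n^{-(d-1)\ga+1} < \infty$, while each diagonal block $|V|^{1/2}\chi_{\Si_n} R_0(\la+i0) V^{1/2}\chi_{\Si_n}$ is compact on $L^2(\Si_n)$ by elliptic regularity and Rellich compactness. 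Combined with the auxiliary step that the block-diagonal piece $1 + F_{\mathrm{diag}}(\la+i0)$ is boundedly invertible with a uniform bound across blocks (single-bump absence of resonances at $\la > 0$, together with the uniform $L^\infty$ and common compact-support bounds on $\{v_n\}$), the compactness of $F_{\mathrm{off}}(\la+i0)$ then allows passage to a subsequence with $\phi_n \to \phi$ strongly, yielding $\|\phi\| = 1$ and $(1+F(\la+i0))\phi = 0$.

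To close the contradiction I must show that $1+F(\la+i0)$ is injective. Setting $\eta := R_0(\la+i0) V^{1/2}\phi$, a short manipulation gives $(H_0 + V - \la)\eta = 0$ distributionally, with $\eta$ satisfying an outgoing (Sommerfeld-type) radiation condition inherited from the $+i0$ boundary value of $R_0$. A Rellich-type uniqueness theorem, adapted to the sparse geometry of $\supp V$, forces $\eta \equiv 0$, whence $V^{1/2}\phi = (H_0 - \la)\eta = 0$ and therefore $\phi = -F(\la+i0)\phi = 0$.

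The main obstacle is the Rellich-type uniqueness: classical formulations assume $V$ compactly supported or sufficiently decaying, neither of which applies here. One must exploit the long gaps $|x_m - x_n| \gtrsim n^\ga$ between consecutive bumps, where $\eta$ satisfies the free Helmholtz equation and can be analyzed by standard exterior-region arguments, and then propagate vanishing inward across each $\Si_n$ via unique continuation. A secondary delicate point is the uniform bound on the inverses of the single-bump blocks $1 + A_n$, which requires precluding a spectral coincidence at $-1$ for any $A_n$ at the given $\la \in [a,b]$.
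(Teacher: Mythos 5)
Your opening move coincides exactly with the paper's: apply Proposition \ref{approximate-inverse} with $B_{1}=B_{2}=(1+F(\la+i\ep))^{-1}$ and reduce everything to a bound on $\|(1+F(\la+i\ep))^{-1}\|$ that does not blow up as $\ep\da0$. The divergence is in how that bound is obtained. The paper disposes of it in a few lines, entirely from norm continuity: if the bound failed, $F(\la+i\ep_{n})$ would converge in norm to $-I$, which is incompatible with $F(\la+i\ep_{n})\ra F(\la+i0)$ (Lemma \ref{continuity-free}) and with the fact that $F(\la+i0)$ annihilates every $\phi$ vanishing on $\Si$; this gives \eqref{bound-above}, after which one fixed small $\ep$ suffices. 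You instead launch a Fredholm-type program (diagonal/off-diagonal splitting, compactness, extraction of a genuine null vector, then injectivity of $1+F(\la+i0)$ via Rellich uniqueness). The surrounding structure of that program is coherent -- the off-diagonal part is indeed Hilbert--Schmidt (cf.\ Remark \ref{rem-final}(ii)), and given your two auxiliary inputs the strong-limit extraction works -- but those two inputs are precisely the hard content, and you leave both unproven.

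Concretely: (i) the uniform-in-$n$ bound on $\|(1+A_{n})^{-1}\|$ for the single-bump blocks does not follow from $\sup_{n}\|v_{n}\|_{\infty}<\infty$ and a common support. Each $1+A_{n}$ is invertible at $\la>0$ (no positive-energy resonance for one compactly supported bump), but the family $\{v_{n}\}$ is not precompact in any topology in which $v\mapsto(1+A_{v})^{-1}$ is continuous, so the inverse norms could a priori degenerate along a subsequence; a quantitative resonance-free estimate uniform over the whole admissible class would be needed, and the paper's Remark \ref{rem-final}(ii) explicitly notes that the mechanism that yields invertibility of $1+F_{diag}$ in the discrete setting (strict positivity of $\Im F_{diag}$) fails in the continuum. (ii) The injectivity of $1+F(\la+i0)$, which you reduce to a Rellich-type theorem for $(H_{0}+V-\la)\eta=0$ with $\eta=R_{0}(\la+i0)V^{1/2}\phi$, is not available here: $\supp V$ is unbounded and $V$ does not decay, so there is no exterior region in which $\eta$ solves the free Helmholtz equation and the classical radiation-condition/far-field arguments have no foothold; moreover $V^{1/2}\phi\in L^{2}$ lies in no weighted space $L^{2}_{\si}$, $\si>\frac12$, so even the "outgoing" property of $\eta$ lacks a standard formulation. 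Your proposed fix (free analysis in the gaps plus unique continuation across each $\Si_{n}$) is a research program -- essentially absence of positive-energy resonant states for sparse potentials -- not an available lemma; note the paper never proves injectivity of $1+F(\la+i0)$ directly and is structured precisely to avoid it. So as written the proposal has a genuine gap at its crux, whereas the paper's own argument needs neither compactness, nor uniform block invertibility, nor any uniqueness theorem.
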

\begin{proof}
Fix any $\la\in[a,b]$. We first claim that there exists $C>0$ such that 
\begin{equation}\label{bound-below}
\inf_{\ep\in(0,1]}\|1+F(\la+i\ep)\|\geq C.
\end{equation}
In fact, if this is not true, then, by Lemma \ref{invertible-away-from-real}, there exsits $\{\ep_{n}\}_{n\in\N}$ such that $\ep_{n}\ra0$ as $n\ra\infty$ and 
\begin{equation*}
\|1+F(\la+i\ep_{n})\|\ra0\quad\text{as}\,\,n\ra\infty.
\end{equation*}
This means that $\{F(\la+i\ep_{n})\}_{n\in\N}$ converges in norm to the operator $-I$, where $I$ is the identity opertor on $L^{2}$. Since $\{F(\la+i\ep_{n})\}_{n\in\N}$ also converges in norm to the operator $F(\la+i0)$ by Lemma \ref{continuity-free}$\rm(i)$, we have $F(\la+i0)=-I$. But, clearly, this is not the case since $F(\la+i0)$ is the integral operator with the integral kernel $|V|^{1/2}(x)k_{0,\la}(x,y)V^{1/2}(y)$,
and thus, if $\phi=0$ on $\Si$, then $F(\la+i0)\phi=0$. Hence, \eqref{bound-below} is true. By \eqref{bound-below} and Lemma \ref{invertible-away-from-real}, we find
\begin{equation}\label{bound-above}
\sup_{\ep\in(0,1]}\|(1+F(\la+i\ep))^{-1}\|\leq\frac{1}{C}.
\end{equation}

Next, we apply Proposition \ref{approximate-inverse} to prove the bounded invertibility of $1+F(\la+i0)$. Note
\begin{equation*}
\begin{split}
&(1+F(\la+i0))(1+F(\la+i\ep))^{-1}\\
&\quad\quad=[1+F(\la+i\ep)+F(\la+i0)-F(\la+i\ep)](1+F(\la+i\ep))^{-1}\\
&\quad\quad=1+[F(\la+i0)-F(\la+i\ep)](1+F(\la+i\ep))^{-1}.
\end{split}
\end{equation*}
Simiarly, 
\begin{equation*}
(1+F(\la+i\ep))^{-1}(1+F(\la+i0))=1+(1+F(\la+i\ep))^{-1}[F(\la+i0)-F(\la+i\ep)]
\end{equation*}
Thus, if we fix some $\ep\in(0,1]$ such that $\|F(\la+i0)-F(\la+i\ep)\|<C$ (this can be done by Lemma \ref{continuity-free}), \eqref{bound-above} implies
\begin{equation*}
\|(F(\la+i0)-F(\la+i\ep))(1+F(\la+i\ep))^{-1}\|<1
\end{equation*}
and
\begin{equation*}
\|(1+F(\la+i\ep))^{-1}(F(\la+i0)-F(\la+i\ep))\|<1.
\end{equation*}
By Proposition \ref{approximate-inverse}, we conclude that $1+F(\la+i0)$ is boundedly invertible.
\end{proof}

In Lemma \ref{invertible-away-from-real} and Lemma \ref{invertible-on-the-real}, we proved that $1+F(z)$ is boundedly invertible for each $z\in\ol{\SS}$. In the next result, we prove the uniform boundedness of their inverses.

\begin{lem}\label{uniform-bounded-inverse}
$(1+F(z))^{-1}$ is continuous on $\ol{\SS}$. In particular, there holds 
\begin{equation*}
\sup_{z\in\ol{\SS}}\|(1+F(z))^{-1}\|<\infty.
\end{equation*}
\end{lem}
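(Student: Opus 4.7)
The plan is to extend the argument already used in Lemma \ref{invertible-away-from-real} from the open strip $\SS$ to the closed strip $\overline{\SS}$, and then invoke compactness to promote continuity to uniform boundedness. All the ingredients are now in place: Lemma \ref{continuity-free}(iii) gives the continuity of $z\mapsto F(z)$ on $\overline{\SS}$, Lemma \ref{invertible-away-from-real} gives bounded invertibility of $1+F(z)$ and continuity of $(1+F(z))^{-1}$ on $\SS$, and Lemma \ref{invertible-on-the-real} gives bounded invertibility of $1+F(\la+i0)$ for every $\la\in[a,b]$.

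First I would verify continuity of $(1+F(z))^{-1}$ at a boundary point $z_{0}=\la+i0$. Fixing such a $z_{0}$, Lemma \ref{invertible-on-the-real} ensures $(1+F(z_{0}))^{-1}\in\LLL(L^{2})$, so exactly as in the proof of Lemma \ref{invertible-away-from-real} I write, for $z\in\overline{\SS}$,
\begin{equation*}
1+F(z)=(1+F(z_{0}))\bigl[1-(1+F(z_{0}))^{-1}(F(z_{0})-F(z))\bigr].
\end{equation*}
By Lemma \ref{continuity-free}(iii) I have $\|F(z)-F(z_{0})\|\ra0$ as $z\ra z_{0}$ inside $\overline{\SS}$, so $\|(1+F(z_{0}))^{-1}(F(z_{0})-F(z))\|<1$ once $z$ is close enough to $z_{0}$. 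For such $z$ the Neumann series converges and yields
\begin{equation*}
(1+F(z))^{-1}=\bigg(1+\sum_{n=1}^{\infty}\bigl[(1+F(z_{0}))^{-1}(F(z_{0})-F(z))\bigr]^{n}\bigg)(1+F(z_{0}))^{-1},
\end{equation*}
together with the norm estimate
\begin{equation*}
\|(1+F(z))^{-1}-(1+F(z_{0}))^{-1}\|\leq\frac{\|(1+F(z_{0}))^{-1}\|\,\|(1+F(z_{0}))^{-1}(F(z_{0})-F(z))\|}{1-\|(1+F(z_{0}))^{-1}(F(z_{0})-F(z))\|}.
\end{equation*}
Letting $z\ra z_{0}$ in $\overline{\SS}$ shows continuity of $(1+F(z))^{-1}$ at $z_{0}$.

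Combining this with the interior continuity already proved in Lemma \ref{invertible-away-from-real}, I conclude that $z\mapsto(1+F(z))^{-1}$ is continuous on all of $\overline{\SS}$. Since $\overline{\SS}=[a,b]\times[0,1]$ is a compact subset of $\C$, the continuous function $z\mapsto\|(1+F(z))^{-1}\|_{\LLL(L^{2})}$ attains its maximum on $\overline{\SS}$, which gives the uniform bound
\begin{equation*}
\sup_{z\in\overline{\SS}}\|(1+F(z))^{-1}\|<\infty.
\end{equation*}

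There is no serious obstacle here: the argument is essentially the same Neumann-series perturbation argument used in Lemma \ref{invertible-away-from-real}, and the only thing that must be checked carefully is that one is allowed to use $z_{0}$ on the real boundary, which is precisely what Lemma \ref{invertible-on-the-real} supplies. In a sense, Lemma \ref{invertible-on-the-real} already contains the only delicate point (ruling out $-1\in\sigma(F(\la+i0))$); once that is in hand, continuity and then uniform boundedness follow automatically by perturbation and compactness.
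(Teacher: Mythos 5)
Your proposal is correct and follows essentially the same route as the paper: a Neumann-series perturbation around each boundary point $\la+i0$ (using Lemma \ref{invertible-on-the-real} for invertibility there and Lemma \ref{continuity-free} for the continuity of $F(z)$ on $\ol{\SS}$), combined with the interior continuity from Lemma \ref{invertible-away-from-real} and compactness of $\ol{\SS}$ to obtain the uniform bound. Your version merely spells out the norm estimate that the paper leaves implicit.
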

\begin{proof}
Considering the compactness of $\ol{\SS}$, it suffices to show the continuity of $(1+F(z))^{-1}$ on $\ol{\SS}$. By Lemma \ref{invertible-away-from-real}, we only need to show the continuity of $(1+F(z))^{-1}$ at $z=\la+i0$ for each $\la\in[a,b]$. The latter actualy follows from the continuity of $F(z)$ on $\ol{\SS}$ by Lemma \ref{continuity-free} as in the proof of Lemma \ref{invertible-away-from-real}. Indeed, fixing any $\la\in[a,b]$, for any $z\in\ol{\SS}$, we have the formal expansion 
\begin{equation*}
(1+F(z))^{-1}=\bigg(1+\sum_{n=1}^{\infty}[(1+F(\la+i0))^{-1}(F(\la+i0)-F(z))]^{n}\bigg)(1+F(\la+i0))^{-1}.
\end{equation*}
The series converges if $\|(1+F(\la+i0))^{-1}(F(\la+i0)-F(z))\|<1$, which, by Lemma \ref{continuity-free}, is true for all $z$ close to $\la$. This completes the proof.
\end{proof}

Finally, we prove Theorem \ref{res-id}.

\begin{proof}[Proof of Theorem \ref{res-id}]
The result follows from \eqref{res-id}, Lemma \ref{continuity-free}, Lemma \ref{invertible-away-from-real}, Lemma \ref{invertible-on-the-real} and Lemma \ref{uniform-bounded-inverse}. In fact, $P(z)$ can be continuously extended to $\ol{\SS}$. In particular, there holds
\begin{equation*}
\sup_{z\in\ol{\SS}}\|P(z)\|<\infty.
\end{equation*}
\end{proof}


\section{Proof of Main Results and Remarks}\label{sec-proof-main-results}

In this section, we first prove Theorem \ref{thm-main}, Theorem \ref{thm-main-1} and Theorem \ref{cor-main}. Then, we make some remarks.

\begin{proof}[Proof of Theorem \ref{thm-main}]
The first part of Theorem \ref{thm-main} is a direct consequence of Theorem \ref{lem-free-res} and Theorem \ref{thm-perturb-res}.

For the existence and completeness of local wave operators, we invoke Proposition \ref{app-local-wave-op}. To do so, writing $H-H_{0}=|V|^{1/2}V^{1/2}$, since clearly $|V|^{1/2}$ is $H$-bounded and $V^{1/2}$ is $H_{0}$-bounded, we only need to show that for any compact interval $I\subset(0,\infty)$, $|V|^{1/2}$ is $H$-smooth on $I$ and $V^{1/2}$ is $H_{0}$-smooth on $I$. 

Let $I\subset(0,\infty)$ be a compact interval. For the $H$-smoothness of $|V|^{1/2}$ on $I$, Proposition \ref{app-local-smoothness} says that it suffices to show that 
\begin{equation}\label{smoothness}
\sup_{\la\in I,\ep\in(0,1]}\||V|^{1/2}R(\la+i\ep)|V|^{1/2}\|<\infty.
\end{equation}
Now, by $|V|^{1/2}=V^{1/2}{\sgn}(V)$, we have
\begin{equation*}
\||V|^{1/2}R(\la+i\ep)|V|^{1/2}\|\leq\||V|^{1/2}R(\la+i\ep)V^{1/2}\|\|{\sgn}(V)\|=\||V|^{1/2}R(\la+i\ep)V^{1/2}\|.
\end{equation*}
Theorem \ref{thm-perturb-res} then implies \eqref{smoothness}. Similarly, the $H_{0}$-smoothness of $V^{1/2}$ on $I$ follows from Theorem \ref{lem-free-res}. This completes the proof.
\end{proof}

Next, we prove Theorem \ref{thm-main-1}.
\begin{proof}[Proof of Theorem \ref{thm-main-1}]
We invoke Proposition \ref{app-wave-op}. To do so, let $\{I_{n}\}_{n\in\N}$ be a family of compact intervals in $(0,\infty)$ such that $(0,\infty)=\cup_{n=1}^{\infty}I_{n}$. By the proof of Theorem \ref{thm-main}, we know that $|V|^{1/2}$ is $H$-smooth on each $I_{n}$ and $V^{1/2}$ is $H_{0}$-smooth on each $I_{n}$. To apply Proposition \ref{app-wave-op}, we only need to show that $\si(H)\bs(0,\infty)$ has Lebesgue measure zero.

Clearly, by Weyl's theorem (see e.g. \cite[Theorem XIII.14]{RS78}), if $\{v_{n}\}_{n\in\N}$ satisfies condition $\rm(i)$ in the statment of Theorem \ref{thm-main-1}, then $\si(H)\bs(0,\infty)$ has Lebesgue measure zero. If $\{v_{n}\}_{n\in\N}$ satisfies condition $\rm(ii)$ in the statment of Theorem \ref{thm-main-1}, we need to use Klaus's theorem (see e.g. \cite{CFKS87,HK00,Kla83}) about the structure of the essential spectrum of $H$, that is, 
\begin{equation*}
\si_{\rm ess}(H)=\EE\cup\R_{+}.
\end{equation*}
Since $\EE$ has Lebesgue measure zero by assumption, we find
\begin{equation*}
|\si(H)\bs(0,\infty)|=|\si_{\rm disc}(H)|+|\si_{\rm ess}(H)\cap(-\infty,0]|=0.
\end{equation*}
Thus, the wave operators exist and are complete.

The conclusion $\si_{ac}(H)=\R_{+}$ follows from general results of the existence and completeness of wave operators (see e.g. \cite[Corollary 3.6.6]{DK05}). This completes the proof.
\end{proof}

We now prove Theorem \ref{cor-main}.

\begin{proof}[Proof of Theorem \ref{cor-main}]
Under the assumption of Corollary \ref{cor-main}, the set $\EE$ is given by
\begin{equation*}
\EE=\si_{\rm disc}(H_{0}+v),
\end{equation*}
and thus, it has zero Lebesgue measure.
\end{proof}

Finally, we make remarks about the results of the current paper.

\begin{rem}\label{rem-final}
\begin{itemize}
\item[\rm(i)] We construct an example showing that the set $\EE$ in the statement of Theorem \ref{thm-main-1} can have positive Lebesgue measure. Let $v$ be a nonpositive, bounded and compactly supported function and suppose $E(1)<0$ is an eigenvalue of $H_{0}+v$. Then, by analytic perturbation theory (see e.g. \cite{Ka76,RS78}), there's a small $\beta_{0}>0$ such that for any $\beta\in(1-\beta_{0},1+\beta_{0})$ there's a unique point $E(\beta)\in\si(H_{0}+\beta v)$ near $E(1)$. Moreover, $E(\beta)$ is analytic. Also, since $v\leq0$, $E(\beta)$ is strictly decreasing by Feynman-Hellmann theorem (see e.g. \cite[Theorem 4.1.29]{St01}). We define the set $\{\beta_{n}\}_{n\in\N}$ by
\begin{equation*}
\{\beta_{n}\}_{n\in\N}:=E^{-1}\big((E(1+\beta_{0}),E(1-\beta_{0}))\cap\Q)\big).
\end{equation*}
It is countable and for any $E\in(E(1+\beta_{0}),E(1-\beta_{0}))$, we can find a subsequence $\{\beta_{n_{k}}\}_{k\in\N}$ such that $E(\beta_{n_{k}})\ra E$ as $k\ra\infty$. Now, consider the operator
\begin{equation*}
H=H_{0}+\sum_{n=1}^{\infty}v_{n}(\cdot-x_{n})
\end{equation*}
with $v_{n}=\beta_{n}v$ for $n\in\N$. Since $\{\beta_{n}\}_{n\in\N}\subset(1-\beta_{0},1+\beta_{0})$, $\{v_{n}\}_{n\in\N}$ are uniformly bounded and uniformly compactly supported. Moreover, above analysis says that $\EE$ contains the set $(E(1+\beta_{0}),E(1-\beta_{0}))$, and therefore has positive Lebesgue measure.

\item[\rm(ii)] Note that $I$ and $II$ in \eqref{an-equality} correspond to the diagonal and off-diagonal part of $F(z)$. The proof of Theorem \ref{lem-free-res} clearly implies that the off-diagonal part of $F(z)$ is Hilbert-Schmidt. Although this compactness does not play a role in our proof, it does play a key role in the work of Jak\v{s}i\'{c} and Poulin (see \cite{JP09}) for the discrete model. Moreover, if we write
$F(z)=F_{diag}(z)+F_{offdiag}(z)$. Then, while $\Im F_{diag}(z)$ is strictly positive for the discrete model (this directly leads to the bounded invertibility of $1+F_{diag}(z)$), it is not true for the continuum model discussed here. In fact, assuming $v_{n}$, $n\in\N$ are nonnegative, then
\begin{equation*}
F_{diag}(z)=\sum_{n=N}^{\infty}v_{n}(\cdot-x_{n})^{1/2}R_{0}(z)v_{n}(\cdot-x_{n})^{1/2},
\end{equation*}
which leads to
\begin{equation*}
\Im F_{diag}(z)=\Im z\sum_{n=N}^{\infty}v_{n}(\cdot-x_{n})^{1/2}R_{0}(\bar{z})R_{0}(z)v_{n}(\cdot-x_{n})^{1/2}
\end{equation*}
by resolvent identity. Thus, for any $z\in\SS$, $\Im F_{diag}(z)\geq0$. But, note that each $v_{n}(\cdot-x_{n})^{1/2}R_{0}(\bar{z})R_{0}(z)v_{n}(\cdot-x_{n})^{1/2}$ is compact, and so, its spectrum contains $0$. Hence, $0\in\si(\Im F_{diag}(z))$, and $\Im F_{diag}(z)$ is not strictly positive.

\item[\rm(iii)] We remark that the proof of Theorem \ref{res-id}, that is,
\begin{equation*}
\sup_{z\in\SS}\|P(z)\|<\infty,
\end{equation*}
can be greatly simplified if $V$ is decaying. In fact, if $V$ is decaying, that is, $\|v_{n}\|_{\infty}\ra0$ as $n\ra\infty$, then Theorem \ref{lem-free-res} actually reads
\begin{equation*}
\sup_{z\in\SS}\|F(z)\|\lesssim\sup_{n\geq N}\|v_{n}\|_{\infty}
\end{equation*}
Therefore, we can fix some large $N$ so that 
$\sup_{z\in\SS}\|F(z)\|\leq\frac{1}{2}$. By stability of bounded invertibility (see e.g. \cite{La02}), for each $z\in\SS$, $1+F(z)$ is boundedly invertible with 
\begin{equation*}
\|(1+F(z))^{-1}\|\leq\frac{1}{1-\|F(z)\|}\leq2.
\end{equation*}
It then follows that for each $z\in\SS$, $\|P(z)\|=\|F(z)(1+F(z))^{-1}\|\leq1$.
\end{itemize}
\end{rem}


\appendix

\section{Smooth Perturbation}\label{smooth-perturb}

We collect some results of Kato's smooth method used in Section \ref{sec-proof-main-results}. The following material is taken from \cite[Section XIII.7]{RS78}. We also refer to \cite[Chapter 4]{Ya92}. 

Let $H_{0}$ and $H$ be self-adjoint operators on some separable Hilbert space $\HH$. Let $A$ be a closed operator on $\HH$. We recall that $A$ is called \textit{$H$-smooth} if for each $\phi\in\HH$ and each $\ep\neq0$, $(H-\la-i\ep)^{-1}\phi\in\DD(A)$ for almost all $\la\in\R$ and 
\begin{equation*}
\sup_{\|\phi\|=1,\ep>0}\int_{\R}\big(\|A(H-\la-i\ep)^{-1}\phi\|^{2}+\|A(H-\la+i\ep)^{-1}\phi\|^{2}\big)d\la<\infty.
\end{equation*}
We also recall that $A$ is called \textit{$H$-smooth on $\Om$}, a Borel set, if $A\chi_{\Om}(H)$ is $H$-smooth. The following result gives a sufficient condition for $A$ being $H$-smooth on $\Om$.

\begin{prop}\label{app-local-smoothness}
Let $\Om\subset\R$ and suppose $\DD(H)\subset\DD(A)$. Then, $A$ is $H$-smooth on $\ol{\Om}$ provided
\begin{equation*}
\sup_{\la\in\Om,\ep\in(0,1)}\|A(H-\la-i\ep)A^{*}\|<\infty.
\end{equation*}
\end{prop}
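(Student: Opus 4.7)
The plan is to invoke Kato's characterization of $H$-smoothness and reduce the local smoothness statement on $\ol{\Om}$ to the uniform resolvent bound furnished by the hypothesis (which I read with $(H-\la-i\ep)^{-1}$ in place of the apparent $(H-\la-i\ep)$). I would first observe that $A$ being $H$-smooth on $\ol{\Om}$ is, by definition, the $H$-smoothness of the bounded operator $B:=A\chi_{\ol{\Om}}(H)$: when $\ol{\Om}$ is compact the spectral subspace $\chi_{\ol{\Om}}(H)\HH$ lies in $\DD(H)\subset\DD(A)$, so the closed graph theorem makes $B$ bounded. By Kato's theorem (\cite[Theorem XIII.25]{RS78}), $B$ is $H$-smooth iff $\sup_{z\in\C\setminus\R}\|BR(z)B^{*}\|<\infty$. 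Using that $\chi_{\ol{\Om}}(H)$ commutes with $R(z)$ and squares to itself, $BR(z)B^{*}=AR(z)\chi_{\ol{\Om}}(H)A^{*}$, reducing the goal to a uniform bound on $\|AR(z)\chi_{\ol{\Om}}(H)A^{*}\|$ for $z=\la+i\ep\in\C\setminus\R$.

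Next, I would split according to $\la=\Re z$. For $\la\in\Om$, write $\chi_{\ol{\Om}}(H)=I-\chi_{\R\setminus\ol{\Om}}(H)$: the hypothesis bounds $\|AR(z)A^{*}\|$ outright, and the remainder $AR(z)\chi_{\R\setminus\ol{\Om}}(H)A^{*}$, after inserting projections via the commutation $R(z)\chi_{\R\setminus\ol{\Om}}(H)=\chi_{\R\setminus\ol{\Om}}(H)R(z)\chi_{\R\setminus\ol{\Om}}(H)$, is controlled by the spectral estimate $\|R(z)\chi_{\R\setminus\ol{\Om}}(H)\|\leq\mathrm{dist}(\la,\R\setminus\ol{\Om})^{-1}$ together with $\|B\|<\infty$. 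For $\la\notin\ol{\Om}$, the spectral theorem yields $\|R(z)\chi_{\ol{\Om}}(H)\|\leq\mathrm{dist}(\la,\ol{\Om})^{-1}$, which combined with the boundedness of $B$ and $B^{*}$ closes the estimate in this regime.

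The main obstacle will be the boundary of $\Om$, where both distance bounds degenerate. This is the crux of Kato's local smoothness theorem, and I expect it is most cleanly handled via the Parseval--Plancherel identity
$$\int_{\R}\|AR(\la+i\ep)\psi\|^{2}\,d\la=2\pi\int_{0}^{\infty}e^{-2\ep t}\|Ae^{-itH}\psi\|^{2}\,dt$$
applied to $\psi\in\chi_{\ol{\Om}}(H)\HH$: dualizing with scalar test functions of $\la$ supported in $\Om$, the sup-norm bound in the hypothesis transfers directly (after absorbing the off-$\Om$ contribution using the distance estimate above) to the time-integral estimate defining $H$-smoothness, bypassing the boundary layer entirely. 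In the paper's applications the hypothesis holds on every compact subinterval of $(0,\infty)$, so one may equivalently enlarge $\Om$ slightly to an open set on which the bound persists, sidestepping the boundary issue altogether.
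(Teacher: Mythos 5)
Your reading of the hypothesis as $\sup_{\la\in\Om,\ep\in(0,1)}\|A(H-\la-i\ep)^{-1}A^{*}\|<\infty$ is the right one (the statement has a typo), and reducing to the operator $B=A\chi_{\ol{\Om}}(H)$ is the correct starting point. But the core of your argument has a genuine gap. In the regime $\la\in\Om$, the remainder $AR(z)\chi_{\R\setminus\ol{\Om}}(H)A^{*}$ cannot be "controlled by $\|R(z)\chi_{\R\setminus\ol{\Om}}(H)\|$ together with $\|B\|$": after commuting the projection through the resolvent, the operator standing outside is $A\chi_{\R\setminus\ol{\Om}}(H)$, not $B$, and this is in general unbounded (take $A=H$, for which $\DD(H)\subset\DD(A)$ trivially). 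Replacing it by the $H$-boundedness of $A$ still produces a factor ${\rm dist}(\la,\R\setminus\ol{\Om})^{-1}$, which blows up exactly in the boundary regime you defer. The proposed repair does not close this: the Plancherel identity would require a uniform bound on $\int_{\R}\|AR(\la+i\ep)\chi_{\ol{\Om}}(H)\psi\|^{2}d\la$ for arbitrary $\psi$, whereas the hypothesis only bounds the sandwiched operator $AR(z)A^{*}$; "dualizing with scalar test functions of $\la$ supported in $\Om$" names no mechanism by which the sandwiched bound yields pointwise-in-$\la$ control of $\|AR(\la+i\ep)\chi_{\ol{\Om}}(H)\psi\|$, and the near-boundary strip cannot be absorbed by the distance estimate (the square of ${\rm dist}(\la,\cdot)^{-1}$ is not integrable across $\partial\Om$). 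Even your intermediate goal $\sup_{z\notin\R}\|BR(z)B^{*}\|<\infty$ is dubious: $H$-smoothness is equivalent to a bound on the \emph{imaginary} part of the resolvent form only, and the real part of the compressed resolvent generically acquires a logarithmic divergence; enlarging $\Om$ does not help, since the complementary spectral projection still sits at $\partial\ol{\Om}$.

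For comparison: the paper offers no proof of its own here — the appendix is quoted from Reed--Simon, Section XIII.7 — and the standard proof there runs through exactly the ingredient your sketch never uses, namely the positivity of $\Im R(z)$. For $\phi\in\DD(A^{*})$, $\Im\lan A^{*}\phi,R(\la+i\ep)A^{*}\phi\ran$ is the Poisson integral of the spectral measure $\mu_{\phi}=\lan A^{*}\phi,E(\cdot)A^{*}\phi\ran$; the hypothesis bounds it by $C\|\phi\|^{2}$ for $\la\in\Om$, $\ep\in(0,1)$, and a Fatou/differentiation argument shows that $\mu_{\phi}$ restricted to $\ol{\Om}$ is absolutely continuous with density at most $(C/\pi)\|\phi\|^{2}$ (the bound at points of $\Om$ controls intervals centered at points of $\ol{\Om}$, ruling out atoms and singular mass up to the boundary). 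Since $\chi_{\ol{\Om}}(H)A^{*}\phi$ has spectral measure $\chi_{\ol{\Om}}\,d\mu_{\phi}$, Kato's characterization of smoothness via the boundedness of the imaginary part (equivalently, of the spectral density) — not via $\sup_{z}\|BR(z)B^{*}\|$ — applied to $A\chi_{\ol{\Om}}(H)$ gives $H$-smoothness on $\ol{\Om}$, with no boundary layer to handle. Without this positivity/spectral-density step your argument does not go through.
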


Regarding the existence and completeness of local wave operators and wave operators, we have the following two results.

\begin{prop}\label{app-local-wave-op}
Suppose that there are closed operators $A$, $B$ on $\HH$ such that
\begin{equation*}
H-H_{0}=A^{*}B
\end{equation*}
in the sense of quadratic forms. Suppose further that $A$ is $H$-bounded and $H$-smooth on some bounded open interval $I\subset\R$ and that $B$ is $H_{0}$-bounded and $H_{0}$-smooth on $I$. Then, the local wave operators $\Om_{\pm}(H,H_{0};I)=s\mbox{-}\lim_{t\ra\pm\infty}e^{iHt}e^{-iH_{0}t}P_{ac}(H_{0})\chi_{I}(H_{0})$ exist and are complete.
\end{prop}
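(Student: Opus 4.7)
The plan is to invoke Kato's smooth method as developed in \cite[Section XIII.7]{RS78}. Both hypotheses translate into the time-integrated bounds
\begin{equation*}
\int_{\R}\|A\chi_{I}(H)e^{-iHt}\eta\|^{2}\,dt\lesssim\|\eta\|^{2},\quad\int_{\R}\|B\chi_{I}(H_{0})e^{-iH_{0}t}\phi\|^{2}\,dt\lesssim\|\phi\|^{2}
\end{equation*}
for all $\eta,\phi\in\HH$. I would combine these with a Cook-type differentiation of $u(t):=e^{iHt}e^{-iH_{0}t}\phi$ applied to vectors $\phi$ in the dense subspace $P_{ac}(H_{0})\chi_{I}(H_{0})\DD(H_{0})$.

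For existence of $\Om_{\pm}(H,H_{0};I)$, I would fix such a $\phi$ and a test vector $\eta\in\HH$, differentiate in $s$, and use the form identity $H-H_{0}=A^{*}B$ to obtain
\begin{equation*}
\langle\eta,u(t_{2})-u(t_{1})\rangle=i\int_{t_{1}}^{t_{2}}\langle Ae^{-iHs}\eta,B\chi_{I}(H_{0})e^{-iH_{0}s}\phi\rangle\,ds,
\end{equation*}
having used $\phi=\chi_{I}(H_{0})\phi$. Restricting to $\eta\in\chi_{I}(H)\HH$ and using that $\chi_{I}(H)$ commutes with $e^{-iHs}$, Cauchy-Schwarz in $s$ paired with the two smoothness bounds gives
\begin{equation*}
\|\chi_{I}(H)[u(t_{2})-u(t_{1})]\|\lesssim\Bigl(\int_{t_{1}}^{t_{2}}\|B\chi_{I}(H_{0})e^{-iH_{0}s}\phi\|^{2}\,ds\Bigr)^{1/2}\ra 0
\end{equation*}
as $t_{1},t_{2}\ra\pm\infty$, so $\chi_{I}(H)u(t)$ is norm-Cauchy. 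The complementary component $\chi_{I}(H)^{\perp}u(t)=e^{iHt}\chi_{I}(H)^{\perp}\chi_{I}(H_{0})e^{-iH_{0}t}\phi$ is controlled by rewriting $\chi_{I}(H)^{\perp}\chi_{I}(H_{0})=\chi_{I}(H)^{\perp}(\chi_{I}(H_{0})-\chi_{I}(H))$ via a Birman-type formula involving $A$ and $B$, and then applying Riemann-Lebesgue on $\phi\in\HH_{ac}(H_{0})$; this forces it to be Cauchy as well. Combining both components and extending by density yields norm convergence for every $\phi\in P_{ac}(H_{0})\chi_{I}(H_{0})\HH$.

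For completeness, I would rerun the argument with $(H,H_{0})$ swapped, writing $H_{0}-H=B^{*}(-A)$. The required local smoothness transfers between $H$ and $H_{0}$ via the second resolvent identity and the uniform boundedness of $(1+F(z))^{-1}$ obtained in Section~\ref{sec-perturbed}, giving existence of $\Om_{\pm}(H_{0},H;I)$. A direct computation using the intertwining property on a dense set then yields $\Om_{\pm}(H_{0},H;I)\Om_{\pm}(H,H_{0};I)=P_{ac}(H_{0})\chi_{I}(H_{0})$ and its mirror, which is completeness.

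The main obstacle will be the rigorous control of the $\chi_{I}(H)^{\perp}$-component in the existence step: the smoothness estimate on $\|Ae^{-iHs}\eta\|^{2}$ breaks down for $\eta\notin\chi_{I}(H)\HH$, so direct Cauchy-Schwarz fails there. The standard remedy analyzes the difference of spectral projections $\chi_{I}(H_{0})-\chi_{I}(H)$ via Stone's formula and the resolvent identity, exchanging resolvents $R_{0}(z)$ and $R(z)$ through the factor $(1+F(z))^{-1}$ to absorb into the available smoothness bounds. A secondary issue is justifying the differentiation under the quadratic form relation $H-H_{0}=A^{*}B$; this is handled by restricting to a form core on which $A^{*}B\phi$ is a bona fide vector and extending by the uniform estimates that the smoothness hypotheses provide.
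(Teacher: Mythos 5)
The paper does not prove this proposition; it is quoted from \cite[Section XIII.7]{RS78} (Kato's local smoothness theorem; see also \cite[Chapter 4]{Ya92}), so your proposal has to stand on its own as a proof of the abstract statement. Its first half does coincide with the standard argument: pairing $u(t_{2})-u(t_{1})$ against $\eta\in\chi_{I}(H)\HH$, using $H-H_{0}=A^{*}B$ in the form sense and Cauchy--Schwarz with the two smoothness integrals correctly shows that $\chi_{I}(H)e^{iHt}e^{-iH_{0}t}\phi$ is norm-Cauchy. The genuine gap is the step you yourself flag as the main obstacle: the component $\chi_{I}(H)^{\perp}e^{iHt}e^{-iH_{0}t}\phi$. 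Your remedy --- write $\chi_{I}(H)^{\perp}\chi_{I}(H_{0})=\chi_{I}(H)^{\perp}(\chi_{I}(H_{0})-\chi_{I}(H))$, expand $\chi_{I}(H_{0})-\chi_{I}(H)$ by a Stone/Birman-type resolvent formula, and finish with Riemann--Lebesgue --- does not close. Riemann--Lebesgue only gives weak convergence $e^{-iH_{0}t}\phi\rightharpoonup 0$ for $\phi\in P_{ac}(H_{0})\HH$, and upgrading $(\chi_{I}(H_{0})-\chi_{I}(H))e^{-iH_{0}t}\phi$ to norm convergence would require compactness of the relevant piece of $\chi_{I}(H_{0})-\chi_{I}(H)$. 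Nothing in the hypotheses (closed $A,B$, relative boundedness, local smoothness) implies any compactness, so whatever route the cited proofs take, it cannot be this one; and in the paper's own application $V$ is a non-decaying sparse potential, so no compactness is available there either. Controlling this ``leakage'' out of the spectral window is precisely the nontrivial content of the quoted theorem, and as written your argument only establishes convergence of the projected expression $\chi_{I}(H)e^{iHt}e^{-iH_{0}t}\chi_{I}(H_{0})$.

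Two further problems sit in the completeness step. First, invoking ``the uniform boundedness of $(1+F(z))^{-1}$ obtained in Section \ref{sec-perturbed}'' is out of place: the proposition is an abstract statement about arbitrary self-adjoint $H_{0},H$ and closed $A,B$ on an abstract Hilbert space $\HH$, and its proof cannot use the concrete operators $F(z)$, $V$ of the main text --- those bounds are what the paper later feeds \emph{into} the proposition, not ingredients of it. Second, no ``transfer'' of smoothness between $H$ and $H_{0}$ is needed or available: since $H-H_{0}$ is a symmetric form, $A^{*}B=B^{*}A$ in the form sense, so the hypotheses are already symmetric under exchanging the roles of $(H_{0},B)$ and $(H,A)$; the reversed local wave operators $\Om_{\pm}(H_{0},H;I)$ follow from exactly the same two smoothness bounds, and your chain-rule identities then give completeness. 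But the same unresolved leakage problem reappears there for the component $\chi_{I}(H_{0})^{\perp}e^{iH_{0}t}e^{-iHt}\chi_{I}(H)P_{ac}(H)$, so until that step is repaired the proposal does not prove the proposition as stated.
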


\begin{prop}\label{app-wave-op}
Suppose that there are closed operators $A$, $B$ on $\HH$ such that $A$ is $H$-bounded and $B$ is $H_{0}$-bounded, and that
\begin{equation*}
H-H_{0}=A^{*}B
\end{equation*}
in the sense of quadratic forms. Let $\{I_{i}\}_{i=1}^{\infty}$ be a family of bounded open intervals and let $S=\cup_{i=1}^{\infty}I_{i}$. Suppose that
\begin{itemize}
\item[\rm(i)] $A$ is $H$-smooth on each $I_{i}$ and $B$ is $H_{0}$-smooth on each $I_{i}$.
\item[\rm(ii)] Both $\si(H)\bs S$ and $\si(H_{0})\bs S$ have Lebesgue measure zero.
\end{itemize}
Then, the wave operators $\Om_{\pm}(H,H_{0})=s\mbox{-}\lim_{t\ra\pm\infty}e^{iHt}e^{-iH_{0}t}P_{ac}(H_{0})$ exist and are complete.
\end{prop}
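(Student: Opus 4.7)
The plan is to assemble $\Om_\pm(H,H_0)$ out of the local wave operators $\Om_\pm(H,H_0;I_i)$ supplied by Proposition \ref{app-local-wave-op}, and then use the measure-zero hypothesis (ii) to identify the resulting ``local-on-$S$'' operator with the genuine wave operator. First I would disjointify the cover, setting $J_1:=I_1$ and $J_i:=I_i\bs\bigcup_{j<i}I_j$, so that $\{J_i\}_{i\in\N}$ are pairwise disjoint Borel sets with $\bigcup_i J_i=S$ and $J_i\subset I_i$. Because $\chi_{J_i}(H_0)=\chi_{J_i}(H_0)\chi_{I_i}(H_0)$, the existence of $\Om_\pm(H,H_0;I_i)$ immediately yields the existence of
\begin{equation*}
\Om_\pm(H,H_0;J_i):=s\mbox{-}\lim_{t\ra\pm\infty}e^{iHt}e^{-iH_0t}P_{ac}(H_0)\chi_{J_i}(H_0),
\end{equation*}
and completeness of the local wave operator on $I_i$ together with the intertwining identity $\Om_\pm(H,H_0;I_i)f(H_0)=f(H)\Om_\pm(H,H_0;I_i)$ gives $\text{Ran}\,\Om_\pm(H,H_0;J_i)=P_{ac}(H)\chi_{J_i}(H)\HH$.

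Next I would upgrade this to a limit involving $\chi_S(H_0)$. For $\psi\in\HH$ the orthogonal decomposition $\chi_S(H_0)\psi=\sum_i\chi_{J_i}(H_0)\psi$ converges in $\HH$. Given $\ep>0$, choose $N$ so that $\big\|\sum_{i>N}\chi_{J_i}(H_0)\psi\big\|<\ep$; then the unitarity of $e^{iHt}e^{-iH_0t}$ forces the tail $e^{iHt}e^{-iH_0t}\sum_{i>N}\chi_{J_i}(H_0)\psi$ to be bounded by $\ep$ \emph{uniformly} in $t$, while the head $e^{iHt}e^{-iH_0t}\sum_{i\leq N}\chi_{J_i}(H_0)\psi$ is a finite sum of convergent terms and therefore converges as $t\ra\pm\infty$. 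A standard $\ep/3$ argument then delivers
\begin{equation*}
\Om_\pm(H,H_0;S):=s\mbox{-}\lim_{t\ra\pm\infty}e^{iHt}e^{-iH_0t}P_{ac}(H_0)\chi_S(H_0),
\end{equation*}
whose range is $\ol{\sum_i P_{ac}(H)\chi_{J_i}(H)\HH}=P_{ac}(H)\chi_S(H)\HH$.

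Finally I would cash in assumption (ii). For any $\psi\in P_{ac}(H_0)\HH$, the scalar spectral measure $\mu_\psi(E):=\lan\psi,\chi_E(H_0)\psi\ran$ is absolutely continuous with respect to Lebesgue measure and supported in $\si(H_0)$, so $\mu_\psi(\R\bs S)\leq\mu_\psi(\si(H_0)\bs S)=0$. Thus $\chi_S(H_0)P_{ac}(H_0)=P_{ac}(H_0)$, which forces $\Om_\pm(H,H_0;S)=\Om_\pm(H,H_0)$ and gives existence. The identical argument with $H$ in place of $H_0$ yields $\chi_S(H)P_{ac}(H)=P_{ac}(H)$, so $\text{Ran}\,\Om_\pm(H,H_0)=P_{ac}(H)\HH$, which is completeness. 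I expect the main obstacle to be the uniform-in-$t$ tail estimate in the middle step: without the unitarity of $e^{iHt}e^{-iH_0t}$ one could not interchange the strong $t$-limit with the strong $N$-limit, and the whole stitching construction would collapse; the measure-zero hypotheses at the end are then essentially bookkeeping to remove the $\chi_S$ cutoff on both sides.
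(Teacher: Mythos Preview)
The paper does not actually prove this proposition: it is stated in the appendix as a result quoted from \cite[Section~XIII.7]{RS78}, so there is no in-paper argument to compare against. Your proposal is correct and is essentially the standard Reed--Simon proof: disjointify the cover, use Proposition~\ref{app-local-wave-op} on each piece, stitch the local wave operators together via the uniform-in-$t$ tail bound coming from the unitarity of $e^{iHt}e^{-iH_{0}t}$, and then invoke hypothesis~(ii) together with absolute continuity of the spectral measures to strip the $\chi_{S}$ cutoffs on both the $H_{0}$ and $H$ sides. The intertwining step you use to pass from completeness on $I_i$ to the identification $\range\,\Om_{\pm}(H,H_{0};J_i)=P_{ac}(H)\chi_{J_i}(H)\HH$ is the one point where you are implicitly citing a standard fact (that local wave operators intertwine bounded Borel functions of $H_{0}$ and $H$), but this is immediate from the definition and is exactly what Reed--Simon do.
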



\end{document}